\newtheorem{theorem}{Theorem}\numberwithin{theorem}{section}
\newtheorem{proposition}[theorem]{Proposition}
\newtheorem{question}[theorem]{Question}
\newtheorem{problem}[theorem]{Problem}
\newtheorem{theoremm}{Theorem}\numberwithin{theoremm}{subsection}
\newtheorem{lemmma}[theoremm]{Lemma}
\newtheorem{nottation}[theoremm]{Notation}
\newtheorem{propposition}[theoremm]{Proposition}
\numberwithin{theoremmm}{subsubsection}
\theoremstyle{remark}
\theoremstyle{definition}
\newtheorem{deffinition}[theoremm]{Definition}
\newcommand{\Rad}{\operatorname{Rad}}
\newcommand{\Aut}{\operatorname{Aut}}
\newcommand{\Alt}{\operatorname{Alt}}
\newcommand{\PSL}{\operatorname{PSL}}
\newcommand{\lcm}{\operatorname{lcm}}
\newcommand{\ord}{\operatorname{ord}}
\newcommand{\Sym}{\operatorname{Sym}}
\newcommand{\Hol}{\operatorname{Hol}}
\newcommand{\A}{\operatorname{A}}
\newcommand{\C}{\operatorname{C}}
\newcommand{\mao}{\operatorname{mao}}
\newcommand{\maffo}{\operatorname{maffo}}
\newcommand{\Inn}{\operatorname{Inn}}
\newcommand{\id}{\operatorname{id}}
\newcommand{\fix}{\operatorname{fix}}
\newcommand{\Out}{\operatorname{Out}}
\newcommand{\e}{\mathrm{e}}
\newcommand{\PGL}{\operatorname{PGL}}
\newcommand{\GL}{\operatorname{GL}}
\newcommand{\Comp}{\operatorname{Comp}}
\newcommand{\Mod}[1]{\ (\textup{mod}\ #1)}
\newcommand{\conj}{\operatorname{conj}}
\newcommand{\IN}{\mathbb{N}}
\newcommand{\IF}{\mathbb{F}}
\newcommand{\End}{\operatorname{End}}
\newcommand{\IZ}{\mathbb{Z}}
\newcommand{\Hom}{\operatorname{Hom}}
\newcommand{\Exp}{\operatorname{Exp}}
\newcommand{\ffrak}{\mathfrak{f}}
\newcommand{\Ffrak}{\mathfrak{F}}
\newcommand{\Ord}{\operatorname{Ord}}
\begin{document}

\title{Finite groups with an affine map of large order}

\author{Alexander Bors\thanks{School of Mathematics and Statistics, Carleton University, 1125 Colonel By Drive, Ottawa ON K1S 5B6, Canada. E-mail: \href{mailto:alexanderbors@cunet.carleton.ca}{alexanderbors@cunet.carleton.ca} \newline The author was supported by the Austrian Science Fund (FWF), project J4072-N32 \enquote{Affine maps on finite groups}. \newline 2020 \emph{Mathematics Subject Classification}: Primary: 20F14. Secondary: 20D05, 20D25, 20D45. \newline \emph{Key words and phrases:} Finite group, Holomorph, Group element order, Derived length, Solvable radical.}}

\date{\today}

\maketitle

\abstract{Let $G$ be a group. A function $G\rightarrow G$ of the form $x\mapsto x^{\alpha}g$ for a fixed automorphism $\alpha$ of $G$ and a fixed $g\in G$ is called an \emph{affine map of $G$}. In this paper, we study finite groups $G$ with an affine map of large order. More precisely, we show that if $G$ admits an affine map of order larger than $\frac{1}{2}|G|$, then $G$ is solvable of derived length at most $3$. We also show that more generally, for each $\rho\in\left(0,1\right]$, if $G$ admits an affine map of order at least $\rho|G|$, then the largest solvable normal subgroup of $G$ has derived length at most $4\lfloor\log_2(\rho^{-1})\rfloor+3$.}

\section{Introduction}\label{sec1}

By classical ideas dating back to the beginning of modern abstract group theory, we have the following two important (right) group actions on each group $G$:
\begin{enumerate}
\item the regular action of $G$ on itself, defined via $g^h:=g\cdot h$ for $g,h\in G$, which allows us to view $G$ as a (regular) permutation group;
\item the natural action of the automorphism group $\Aut(G)$ on $G$, defined via $g^{\alpha}:=\alpha(g)$ (function evaluation) for $g\in G$ and $\alpha\in\Aut(G)$, allowing us to study the element structure of $G$ \enquote{up to abstract equivalence}.
\end{enumerate}
These actions can be combined into just one faithful action of the external semidirect product $\Aut(G)\ltimes G$, via $x^{(\alpha,g)}:=x^{\alpha}g$ for $x,g\in G$ and $\alpha\in\Aut(G)$. The image of this action, a permutation group on $G$, is called the \emph{holomorph of $G$} and denoted by $\Hol(G)$, see \cite[p.~37]{Rob96a}. In this paper, we call the elements of $\Hol(G)$ the \emph{affine maps of $G$}.

Affine maps of groups arise naturally in various contexts. For example, the finite primitive permutation groups are classified in the celebrated O'Nan-Scott theorem, \cite[Theorem in Section 2]{LPS88a}, and several infinite families of them (more precisely, the classes HA, HS and HC described in \cite[Section 3]{Pra90a}) can be viewed as subgroups of holomorphs. Affine maps also arise in applied contexts. For instance, linear congruential pseudorandom number generators (see \cite[beginning of Section 7.3]{Nie92a}) are based on the iteration of affine maps of finite cyclic groups.

This paper, while being a theoretical contribution to the study of affine maps of general finite groups, is loosely motivated by the above mentioned application of affine maps in pseudorandom number generation. Indeed, a necessary (though not sufficient) condition for an affine map $A$ of a finite group $G$ to be useful for the construction of a \enquote{good} pseudorandom number generator is that for at least one $g\in G$, the \emph{iteration orbit of $g$ under $A$}, i.e., the set $\{g^{\left(A^n\right)}\mid n\in\IZ\}$ of elements of $G$ obtainable from $g$ by iterated application of $A$, is \enquote{large} (in fact, Niederreiter in \cite[Section 7.2, p.~164]{Nie92a} mentions the stronger condition that the smallest iteration orbit length of $A$ should be large). For the purposes of this paper, we assume that \enquote{large} here means \enquote{at least $\rho|G|$} for a suitable, given $\rho\in\left(0,1\right]$ (such as $\rho=\frac{1}{2}$). In this regard, a notable earlier result on general groups is \cite[Theorem 6.2]{JS75a}, from a 1975 paper by Jonah and Schreiber, which provides a classification of the (not necessarily finite) groups $G$ with an affine map $A$ such that $G$ as a whole is an iteration orbit of $A$.

In view of this, it is natural to ask what can be said about a finite group $G$ that achieves an affine map $A$ such that the order of $A$ (the least common multiple of the cycle lengths of $A$ on $G$) is at least $\rho|G|$. By an earlier result of the author, \cite[Theorem 1.1.3(2)]{Bor17a}, we know that the index $|G:\Rad(G)|$ of the solvable radical of $G$ (the largest solvable normal subgroup of $G$) is at most $\rho^{-5.91}$ (and only at most $\rho^{-1.78}$ if $A$ is an automorphism of $G$, see \cite[Theorem 1.1.1(3)]{Bor17a}), so $G$ is \enquote{almost solvable}. This provides a partial theoretical justification for why mostly abelian groups are studied in the context of pseudorandom number generation, but it still leaves open the question how complex $\Rad(G)$ itself can be -- for example, can it be of arbitrarily large derived length? The following main result of this paper answers this question in the negative:

\begin{theorem}\label{mainTheo}
Let $G$ be a finite group, and let $\rho\in\left(0,1\right]$.
\begin{enumerate}
\item If $G$ admits an affine map of order larger than $\frac{1}{2}|G|$, then $G$ is solvable of derived length at most $3$.
\item If $G$ admits an affine map of order at least $\rho|G|$, then the derived length of $\Rad(G)$ is at most $4\cdot\lfloor\log_2(\rho^{-1})\rfloor+3$.
\end{enumerate}
\end{theorem}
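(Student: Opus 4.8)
The plan is to bound the derived length of $\Rad(G)$ in terms of $\rho$ by analysing an affine map $A = (\alpha, g)$ of order at least $\rho|G|$ and passing to a suitable quotient on which $A$ acts. First I would reduce to the case $G = \Rad(G)$, i.e.\ to $G$ solvable: the quotient map $G \to G/\Rad(G)$ need not be $A$-equivariant, but one can instead pass to a characteristic solvable subgroup. More precisely, writing $S = \Rad(G)$, the map $\alpha$ restricts to an automorphism of $S$, and the composite of $A$ with the retraction associated to a Hall-type decomposition still has large order on $S$; the cleanest route is to observe that the order of $A$ is at most $\ord(A|_S) \cdot |G : S|$ up to the bounded factor coming from \cite[Theorem 1.1.3(2)]{Bor17a}, so $A$ induces an affine map of $S$ of order at least $\rho' |S|$ with $\rho' \geq \rho \cdot c$ for an absolute constant $c$ (in fact $\rho' \geq \rho^{5.91}$-type bounds suffice, but one must be careful that this does not blow up the final logarithm — more likely one argues that $\ord(A)$ divides $\lcm$ of the orders of the induced affine maps on $S$ and on $G/S$, and the latter divides $|\Aut(G/S)| \cdot |G/S|$ which is bounded, so one loses only a bounded \emph{additive} amount in $\log_2$). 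So from now on assume $G$ is solvable and $A = (\alpha,g) \in \Hol(G)$ has $\ord(A) \geq \rho|G|$.

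Next, the core of the argument is an induction on the derived length via the action of $A$ on the abelian section $G/G'$. Let $d = \dl(G)$. The key observation is that $\ord(A)$ divides the product (or lcm) of $\ord(\bar A)$, where $\bar A$ is the induced affine map on $G/G'$, and $\ord(A|_{G'})$ — more precisely, on each factor of the derived series $G^{(i)}/G^{(i+1)}$ the map $A$ induces an affine map $A_i$ of an abelian group, and $\ord(A)$ divides $\prod_{i=0}^{d-1} \ord(A_i)$. Each $\ord(A_i)$ is at most $|G^{(i)}/G^{(i+1)}| \cdot |\Aut(G^{(i)}/G^{(i+1)})|$, but that bound is far too weak. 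The real input should be a lemma saying that an affine map of an abelian group $B$ has order at most $|B|$ \emph{unless} $B$ has a specific structure — but since $\ord(A_i) \le \exp(B_i) \cdot (\text{something})$ can exceed $|B_i|$, the right statement is probably: the order of an affine map $(\beta,b)$ of an abelian group $B$ divides $\ord(\beta)\cdot\exp(B)$, and more usefully $\ord(\beta) \le$ a quantity controlled by how $\beta$ acts. The decisive step is then: if $\dl(G)$ is large, then the derived factors $G^{(i)}/G^{(i+1)}$ are numerous, so $\prod_i |G^{(i)}/G^{(i+1)}| = |G|$ forces most factors to be \enquote{small}, and on a small abelian factor an affine map has order at most a small multiple of the factor's size; multiplying these up, one gets $\ord(A) \le |G| \cdot 2^{-\Omega(d)}$, which for $d > 4\lfloor\log_2(\rho^{-1})\rfloor + 3$ contradicts $\ord(A) \ge \rho|G|$.

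To make the $2^{-\Omega(d)}$ gain precise I would argue as follows. Group the derived factors into pairs (or triples) of consecutive ones; within each block $H = G^{(i)}/G^{(i+2)}$, which is a solvable group of derived length $2$, show that the induced affine map has order at most $\tfrac12 |H|$ — this is exactly the content of part~(1) of the theorem applied one block at a time (a metabelian group with an affine map of order $> \tfrac12|H|$ would be forbidden, since metabelian of derived length $2$ — wait, part (1) allows derived length $3$; so instead group into blocks of derived length $4$, i.e.\ four consecutive derived factors, and use that a solvable group of derived length $4$ cannot have an affine map of order $>\tfrac12$ of its size, which is part~(1) contrapositive). With $k = \lfloor d/4 \rfloor$ such blocks, $\ord(A) \le \prod_{\text{blocks}} \tfrac12|H_j| \cdot (\text{boundary correction}) \le 2^{-k}|G| \cdot C$, so $\rho \le 2^{-k}C$, giving $k \le \log_2(\rho^{-1}) + \log_2 C$ and hence $d \le 4\log_2(\rho^{-1}) + O(1)$; a careful accounting of the constant $C$ and the overlap between blocks should yield exactly the stated bound $4\lfloor\log_2(\rho^{-1})\rfloor + 3$.

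The main obstacle I anticipate is the submultiplicativity of $\ord(A)$ across a normal series: it is \emph{not} true in general that $\ord(A) = \lcm$ or $\prod$ of the orders on the factors — one has $\ord(A) \mid \ord(A|_N)\cdot\ord(\bar A)$ for a single normal subgroup $N$, but iterating this naively already costs a factor per step, and one must ensure these costs telescope correctly rather than compounding into an exponential loss that would destroy the logarithmic bound. Handling this cleanly — probably by working with a single well-chosen characteristic series (the derived series grouped into length-$4$ blocks) and invoking part~(1) as a black box on each block, rather than peeling off one abelian factor at a time — is where the real care is needed, together with verifying that the reduction to the solvable case in the first paragraph only costs a bounded additive term in the exponent.
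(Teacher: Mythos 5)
There is a genuine gap, and it is the heart of the matter: you never prove part (1), yet your argument for part (2) invokes it (in the form ``a solvable group of derived length $4$ has no affine map of order $>\frac{1}{2}$ of its size'') as a black box on each block of the derived series. Since part (1) is itself one of the two statements to be proved, this is circular unless you supply its proof, and that proof is where essentially all of the paper's work lies: a classification of the automorphisms $\alpha$ of an elementary abelian $p$-group $V$ with $\lcm_{x\in V}\ord(\A_{\alpha,x})>\frac{1}{2}|V|$ (and the consequences that such $\alpha$ has at most $p$ fixed points, rigid centralizer exponent, and no proper root); the deduction that a metabelian group with this property has cyclic derived subgroup; the purely group-theoretic fact that in a solvable group of derived length $3$ the three derived factors cannot all be cyclic; and, for the solvability in part (1), a CFSG-based analysis of $\ffrak(S^n)$ for nonabelian characteristically simple groups. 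None of this is present or even sketched in your proposal.

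The second problem is the submultiplicativity issue you yourself flag but do not resolve, and it also affects your reduction to $\Rad(G)$. An affine map $A=\A_{\alpha,g}$ does not restrict to a characteristic subgroup $N$ at all (translation by $g$ does not preserve $N$); what restricts is the power $A^{\ord(\bar A)}$, whose automorphism part is a \emph{power} of $\alpha_{\mid N}$. The clean fix is to replace the order of the single map $A$ by the invariant $\Ffrak(\alpha)=\lcm_{x\in G}\ord(\A_{\alpha,x})$ and $\ffrak(G)=\max_{\alpha}\Ffrak(\alpha)/|G|$: then $\Ffrak(\alpha)\le\Ffrak(\tilde\alpha)\cdot\Ffrak\bigl((\alpha_{\mid N})^{\Ffrak(\tilde\alpha)}\bigr)$, hence $\ffrak(G)\le\ffrak(N)\cdot\ffrak(G/N)$ \emph{exactly}, with no boundary corrections and no constant $C$; iterating over the series $\Rad(G)^{(4i)}$ gives $\rho\le(1/2)^{\lfloor\ell/4\rfloor}$ and the stated bound on the nose. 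With this invariant the passage to the solvable radical is also immediate, since $\ffrak(\Rad(G))\ge\ffrak(G)\ge\rho$ because $\Rad(G)$ is characteristic and $\ffrak\le 1$ always; your proposed detour through $|G:\Rad(G)|\le\rho^{-5.91}$ (or through an lcm identity that is false as stated) would introduce a $\rho$-dependent loss that ruins the exponent $4\lfloor\log_2(\rho^{-1})\rfloor+3$. Finally, note that because the subgroup contribution involves a power of the restricted automorphism, the per-block input must be the statement ``solvable with $\ffrak>\frac{1}{2}$ has derived length at most $3$'', which is strictly stronger than part (1) restricted to solvable groups; so even granting part (1), your block argument does not quite close.
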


In addition to \cite[Theorem 1.1.3(2)]{Bor17a}, this provides further justfication for why abelian groups are natural candidates to achieve affine maps with large cycles.

\section{Preliminaries}\label{sec2}

\subsection{Main proof idea and overview of this paper}\label{subsec2P1}

Note that if $\maffo(G)$ denotes the largest order of an affine map of the finite group $G$, then both statements of Theorem \ref{mainTheo} provide structural restrictions on $G$ under suitable lower bounds on $\maffo(G)$. In order to prove Theorem \ref{mainTheo}, we will actually prove something stronger: We will replace $\maffo(G)$ by a certain other, technical parameter, denoted by $\Ffrak(G)$ and introduced in Notation \ref{fNot}(2) below. This function was already used in the author's earlier paper \cite{Bor17a}, see \cite[Definition 5.2.3 and the paragraph thereafter]{Bor17a}. In contrast to $\maffo$, $\Ffrak$ has the nice property that $\Ffrak(G)\leq\Ffrak(N)\cdot\Ffrak(G/N)$ whenever $N$ is a characteristic subgroup of $G$, see \cite[Lemma 5.2.4]{Bor17a} and also Lemma \ref{fLem}(3) below. This allows us to use structural reduction arguments when considering lower bounds on $\Ffrak(G)$ of the form $\rho|G|$ for a constant $\rho\in\left(0,1\right]$.

Apart from the definition of $\Ffrak$, Subsection \ref{subsec2P3} contains a few basic and known results on $\Ffrak$ and on affine maps that will be used frequently in this paper. In Subsection \ref{subsec2P4}, we collect a few more known results (not related to $\Ffrak$) that will be used in the proof of Theorem \ref{mainTheo}.

Section \ref{sec3} is concerned with the proof of Theorem \ref{mainTheo}(2). This proof does not require the classification of finite simple groups and mainly consists of the verification of a statement that resembles Theorem \ref{mainTheo}(1) but is not strong enough to imply it; this statement is formulated as Proposition \ref{mainProp}. In Section \ref{sec4}, we will strengthen Proposition \ref{mainProp} and thereby prove Theorem \ref{mainTheo}(1) by considering the $\Ffrak$-values of finite nonabelian characteristically simple groups, making use of the classification of finite simple groups. Finally, in Section \ref{sec5}, we discuss some related open problems and questions for further research.

\subsection{Notation and terminology}\label{subsec2P2}

We denote by $\IN$ the set of natural numbers (including $0$), and by $\IN^+$ the set of positive integers. The restriction of a function $f$ to a set $M$ is denoted by $f_{\mid M}$. For the value of a function $f$ at an argument $x$, we will use the notations $f(x)$ and $x^f$ interchangeably, but we prefer the latter if $f$ is an element of a semigroup of functions acting naturally on a set containing $x$; this is because we assume all semigroup actions to be on the right. If $n$ is a positive integer and $p$ is a prime, we denote by $n_p$ the largest power of $p$ dividing $n$ (which may be equal to $1$) and set $n_{p'}:=\frac{n}{n_p}$.

For a finite group $G$, we denote by $\Exp(G)$ the group exponent of $G$, by $\zeta G$ the center of $G$ and by $\Phi(G)$ the Frattini subgroup of $G$. Moreover, for $n\in\IN$, the notation $G^{(n)}$ denotes the $n$-th derived subgroup of $G$ (if $n$ is small, we may also use iterated primes, such as $G'$ instead of $G^{(1)}$, $G''$ instead of $G^{(2)}$, and so on). For an element $g\in G$, the notation $\ord(g)$ denotes the element order of $g$ in $G$, and $\conj_G(g)$ denotes the inner automorphism (conjugation) by $g$ on $G$. Finally, if $\alpha$ is an automorphism of $G$ and $g\in G$, then $\A_{\alpha,g}$ denotes the affine map $x\mapsto x^{\alpha}g$ of $G$. If $A$ is a (multiplicative) group or $K$-algebra for some field $K$ and $\alpha\in A$, we denote by $\C_A(\alpha):=\{\beta\in A\mid \alpha\beta=\beta\alpha\}$ the centralizer of $\alpha$ in $A$.

The algebraic closure of a field $K$ is denoted by $\overline{K}$, and the multiplicative group of units of a ring $R$ by $R^{\ast}$. For a prime power $q$, the notation $\IF_q$ stands for the finite field with $q$ elements. If $P(X)\in R[X]$ is a univariate polynomial over the ring $R$, then $\deg{P(X)}$ denotes the degree of $P(X)$. If $R$ is a ring and $M_1,M_2$ are $R$-modules, then $\Hom_R(M_1,M_2)$ denotes the set of $R$-module homomorphims $M_1\rightarrow M_2$, and $\End_R(M):=\Hom_R(M,M)$ if $M$ is an $R$-module. If $R$ is commutative, then $\End_R(M)$ is an $R$-algebra, with $\End_R(M)^{\ast}=\Aut_R(M)$, the group of $R$-module automorphisms of $M$.

\subsection{The functions \texorpdfstring{$\Ffrak$}{F} and \texorpdfstring{$\ffrak$}{f}, and the notion of a shift}\label{subsec2P3}

We now introduce the function $\Ffrak$ mentioned in Subsection \ref{subsec2P1}.

\begin{nottation}\label{fNot}
Let $G$ be a finite group.
\begin{enumerate}
\item For an automorphism $\alpha$ of $G$, we set $\Ffrak(\alpha):=\lcm_{x\in G}{\ord(\A_{\alpha,x})}$ and $\ffrak(\alpha):=\frac{1}{|G|}\Ffrak(\alpha)$.
\item We set $\Ffrak(G):=\max_{\alpha\in\Aut(G)}{\Ffrak(\alpha)}$ and $\ffrak(G):=\frac{1}{|G|}\Ffrak(G)=\max_{\alpha\in\Aut(G)}{\ffrak(\alpha)}$.
\end{enumerate}
\end{nottation}

The following lemma lists some important known properties of $\ffrak$ and $\Ffrak$:

\begin{lemmma}\label{fLem}
Let $G$ be a finite group. Then the following hold:
\begin{enumerate}
\item $\ffrak(G)\leq1$. In particular, the maximum element order in $\Hol(G)$ cannot exceed $|G|$.
\item Let $\alpha\in\Aut(G)$, and let $N$ be an $\alpha$-invariant normal subgroup of $G$. Denote by $\tilde{\alpha}$ the automorphism of $G/N$ induced by $\alpha$. Then $\Ffrak(\tilde{\alpha})\mid\Ffrak(\alpha)\leq\Ffrak(\tilde{\alpha})\cdot\Ffrak((\alpha_{\mid N})^{\Ffrak(\tilde{\alpha})})$.
\item Let $N$ be a characteristic subgroup of $G$. Then $\ffrak(G)\leq\ffrak(N)\cdot\ffrak(G/N)\leq\min\{\ffrak(N),\ffrak(G/N)\}$.
\end{enumerate}
\end{lemmma}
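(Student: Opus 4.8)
The proof rests on the \emph{shift}: the $n$-fold iterate of an affine map is again affine, with $\A_{\alpha,g}^{n}=\A_{\alpha^{n},\,g^{\alpha^{n-1}}\cdots g^{\alpha}g}$, and if $d:=\ord(\alpha)$ then $g^{\alpha^{dk-1}}\cdots g^{\alpha}g=\bigl(g^{\alpha^{d-1}}\cdots g^{\alpha}g\bigr)^{k}$, so that $\ord(\A_{\alpha,g})=\ord(\alpha)\cdot\ord(h)$ with $h:=g^{\alpha^{d-1}}\cdots g^{\alpha}g$. I would prove the three parts in the order (2), (1), (3), since only the first inequality of (3) — which rests on (2) alone — enters the proof of (1).

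\emph{Part (2).} The divisibility $\Ffrak(\tilde\alpha)\mid\Ffrak(\alpha)$ is immediate: for any lift $x\in G$ of $\bar x\in G/N$, the map $\A_{\alpha,x}$ (using $\alpha$-invariance of $N$) permutes the cosets of $N$ and induces $\A_{\tilde\alpha,\bar x}$ on $G/N$, whence $\ord(\A_{\tilde\alpha,\bar x})\mid\ord(\A_{\alpha,x})\mid\Ffrak(\alpha)$; take the lcm over $\bar x$. For the upper bound put $d:=\Ffrak(\tilde\alpha)$ and fix $x\in G$. As $\ord(\A_{\tilde\alpha,\bar x})\mid d$, the shift gives $\A_{\alpha,x}^{d}=\A_{\alpha^{d},w}$ with $w:=x^{\alpha^{d-1}}\cdots x^{\alpha}x$, and this map induces the identity on $G/N$; hence $\alpha^{d}$ stabilises $N$, $w\in N$, and $\A_{\alpha,x}^{d}$ fixes $N$ and each of its cosets setwise. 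Identifying a coset $gN$ with $N$ via $gz\leftrightarrow z$, the restriction of $\A_{\alpha,x}^{d}$ to $gN$ becomes the affine map of $N$ with translation part $c_{g}w$ and automorphism part $(\alpha_{\mid N})^{d}\circ\conj_{N}(c_{g}^{-1})$, where $c_{g}:=g^{-1}g^{\alpha^{d}}\in N$. The key point is that multiplying an automorphism of $N$ by an inner automorphism of $N$ does not change its $\Ffrak$-value; granting this, each restriction has order dividing $\Ffrak((\alpha_{\mid N})^{d})$, so $\ord(\A_{\alpha,x}^{d})\mid\Ffrak((\alpha_{\mid N})^{d})$ and $\ord(\A_{\alpha,x})\mid d\cdot\Ffrak((\alpha_{\mid N})^{d})$. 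Taking the lcm over $x\in G$ gives the stated bound.

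\emph{Parts (1) and (3).} For each $\alpha\in\Aut(G)$, (2) combined with the definitions gives $\Ffrak(\alpha)\le\Ffrak(\tilde\alpha)\cdot\Ffrak((\alpha_{\mid N})^{\Ffrak(\tilde\alpha)})\le\Ffrak(G/N)\cdot\Ffrak(N)$ whenever $N$ is characteristic; maximising over $\alpha$ and dividing by $|G|=|N|\cdot|G/N|$ yields the first inequality of (3), namely $\ffrak(G)\le\ffrak(N)\cdot\ffrak(G/N)$. Part (1) then follows by induction on $|G|$: if $G$ has a proper nontrivial characteristic subgroup, apply this together with the inductive hypothesis; otherwise $G=T^{k}$ is characteristically simple, and either $T\cong\IZ/p$, where by the shift formula $\ffrak(G)\le1$ reduces to the known bound $\ord(\alpha)\le|G:\fix(\alpha)|$ for automorphisms of finite abelian groups (the additive analogue of the fact that the maximal element order of $\GL_{n}(q)$ is less than $q^{n}$ — one uses that $\{x^{\alpha^{d-1}}\cdots x\mid x\in G\}=\im(1+\alpha+\cdots+\alpha^{d-1})$ has $\im(\alpha-\id)$ in its kernel), or $T$ is nonabelian simple and $\ffrak(T^{k})\le1$ is read off from the structure of $\Aut(T^{k})$. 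With $\ffrak\le1$ established, $\ffrak(N)\cdot\ffrak(G/N)\le\min\{\ffrak(N),\ffrak(G/N)\}$ is immediate, completing (3). (Alternatively, (1) and (3) may be cited from \cite{Bor17a}, part (3) being \cite[Lemma 5.2.4]{Bor17a}.)

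\emph{Main obstacle.} The delicate step is the inner-twist invariance used in (2): that $\ord(\A_{\gamma\iota,t})\mid\Ffrak(\gamma)$ for $\gamma\in\Aut(N)$, $\iota\in\Inn(N)$, $t\in N$, equivalently that $\Ffrak$ is constant on cosets of $\Inn(N)$ in $\Aut(N)$. For $\gamma=\id_{N}$ it is easy, since $\A_{\conj_{N}(c^{-1}),t}$ is the commuting product of left translation by $c$ and right translation by $c^{-1}t$, so has order dividing $\Exp(N)=\Ffrak(\id_{N})$; for a coboundary twist $c=a\,(a^{-1})^{\gamma}$ one conjugates by left translation by $a$ to reduce to $\gamma$ itself. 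The remaining cases demand a more careful shift computation, and this is where most of the bookkeeping lies — it is essentially contained in the proof of \cite[Lemma 5.2.4]{Bor17a}.
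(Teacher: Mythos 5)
Your reduction of part (2) is set up correctly (the divisibility $\Ffrak(\tilde\alpha)\mid\Ffrak(\alpha)$, the passage to $\A_{\alpha,x}^{d}$ with $d=\Ffrak(\tilde\alpha)$, and the identification of the restriction to a coset $gN$ with the map $z\mapsto c_{g}z^{(\alpha_{\mid N})^{d}}w$, $c_{g}=g^{-1}g^{\alpha^{d}}\in N$), but the whole upper bound then rests on the claim that the order of a permutation of $N$ of the form $z\mapsto a\,z^{\gamma}b$ divides $\Ffrak(\gamma)$, equivalently that $\Ffrak$ is constant on cosets of $\Inn(N)$ in $\Aut(N)$. This is exactly the step you do not prove: your two special cases ($\gamma=\id_{N}$, and twists by elements of the form $a(a^{-1})^{\gamma}$, which are the only twists reachable by conjugating with left translations inside $\Hol(N)$) do not cover the twists that actually occur, since $c_{g}=g^{-1}g^{\alpha^{d}}$ ranges over values coming from arbitrary $g\in G$, not from $N$, and so need not be coboundaries for $(\alpha_{\mid N})^{d}$. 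The claim is plausible but genuinely nontrivial -- within a single $\Inn(N)$-coset the automorphism orders and the shift orders can vary and only their products balance out (this already happens for $N$ a dihedral or quaternion $2$-group) -- and saying it is \enquote{essentially contained in the proof of \cite[Lemma 5.2.4]{Bor17a}} is a guess, not an argument: it is not the statement of that lemma, and nothing in your write-up verifies that its proof delivers it. As it stands, the upper bound in (2), and with it the first inequality in (3), is unproven.

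For comparison: the paper offers no self-contained argument at all here -- it cites \cite[Theorem 5.2.3]{Bor17a} for (1), obtains (2) by inspecting the proof of \cite[Lemma 5.2.4]{Bor17a}, and deduces (3) from (1) and (2). So your fallback (\enquote{cite \cite{Bor17a}}) coincides with the paper, while your attempted direct proof adds the gap above. Part (1) of your outline is also only a sketch: the elementary abelian base case leans on the unproved bound $\ord(\alpha)\leq|G:\fix(\alpha)|$ (true, but it needs a short Jordan-form argument you do not give), and the nonabelian characteristically simple case, which is the genuinely laborious one (cf.\ the estimates in Section \ref{sec4} and \cite[proof of Theorem 5.2.5]{Bor17a}), is dismissed with \enquote{read off from the structure of $\Aut(T^{k})$}. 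The correct and complete portions of your proposal are the divisibility half of (2), the deduction of (3) from (1) and (2), and the coset bookkeeping itself; to make the rest stand on its own you would have to actually prove the inner-twist invariance claim (or restructure the argument to avoid it) and carry out the base cases of (1).
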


\begin{proof}
For statement (1), see \cite[Theorem 5.2.3]{Bor17a}. Statement (2) follows by inspecting \cite[proof of Lemma 5.2.4]{Bor17a}. Finally, the first inequality in statement (3) follows from statement (2) and is also the statement of \cite[Lemma 5.2.4]{Bor17a}, whereas the second inequality in statement (3) follows from statement (1).
\end{proof}

The following notions are also crucial for the study of affine maps:

\begin{deffinition}\label{shiftDef}
Let $G$ be a finite group, and let $\alpha\in\Aut(G)$.
\begin{enumerate}
\item For $x\in G$, the element $x^{\left(\alpha^{\ord(\alpha)-1}\right)}x^{\left(\alpha^{\ord(\alpha)-2}\right)}\cdots x^{\left(\alpha^2\right)}x^{\alpha}x\in G$ is called the \emph{$\alpha$-shift of $x$}.
\item The function $G\rightarrow G$ mapping $x\in G$ to the $\alpha$-shift of $x$ is called the \emph{shift function of $\alpha$} and denoted by $\sigma_{\alpha}$.
\end{enumerate}
\end{deffinition}

Some useful properties of shifts were discussed in \cite[Subsection 3.1]{Bor17a}, and we list those that we will need in this paper in the following lemma:

\begin{lemmma}\label{shiftLem}
Let $G$ be a finite group, and let $\alpha\in\Aut(G)$.
\begin{enumerate}
\item For each $x\in G$, the order of $\A_{\alpha,x}$ is $\ord(\alpha)\cdot\ord(\sigma_{\alpha}(x))$.
\item For each $x\in G$, we have $\sigma_{\alpha}(x)^{\alpha}=\sigma_{\alpha}(x)^{\left(x^{-1}\right)}$. In particular, if $G$ is abelian, then $\sigma_{\alpha}$ is a group homomorphism $G\rightarrow\fix(\alpha):=\{x\in G\mid x^{\alpha}=x\}$.
\item Assume that $G$ is centerless and that $\alpha=\conj_G(r)$ is an inner automorphism of $G$. Then $\ord(\A_{\alpha,x})=\lcm(\ord(r),\ord(xr))$. In particular, $\Ffrak(\alpha)=\Exp(G)$.
\end{enumerate}
\end{lemmma}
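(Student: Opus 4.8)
The plan is to reduce all three statements to the explicit iteration formula for affine maps. First I would show, by a one-line induction on $n$ using that $\alpha$ is an automorphism, that
\[
y^{\left(\A_{\alpha,x}^n\right)}=y^{\alpha^n}\cdot x^{\alpha^{n-1}}x^{\alpha^{n-2}}\cdots x^{\alpha}x\qquad\text{for all }x,y\in G\text{ and all }n\in\IN.
\]
Writing $p_n(x):=x^{\alpha^{n-1}}x^{\alpha^{n-2}}\cdots x^{\alpha}x$ and $d:=\ord(\alpha)$, note that $p_d(x)=\sigma_{\alpha}(x)$. From the displayed identity, $\A_{\alpha,x}^n=\id$ if and only if $p_n(x)=1$ and $\alpha^n=\id$: for the forward direction substitute $y=1$ to get $p_n(x)=1$, and then let $y$ be arbitrary to get $\alpha^n=\id$. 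In particular $d\mid\ord(\A_{\alpha,x})$; and if $n=dk$, then the periodicity $\alpha^d=\id$ lets me split the $dk$-fold product $p_{dk}(x)$ into $k$ consecutive blocks, each equal to $\sigma_{\alpha}(x)$, so that $p_{dk}(x)=\sigma_{\alpha}(x)^k$. Hence $\A_{\alpha,x}^{dk}=\id$ if and only if $\ord(\sigma_{\alpha}(x))\mid k$, and the least admissible $n=dk$ equals $d\cdot\ord(\sigma_{\alpha}(x))$; this is statement (1).

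For statement (2) I would apply $\alpha$ to $\sigma_{\alpha}(x)=x^{\alpha^{d-1}}\cdots x^{\alpha}x$ termwise, getting $\sigma_{\alpha}(x)^{\alpha}=x^{\alpha^d}x^{\alpha^{d-1}}\cdots x^{\alpha}=x\cdot\bigl(x^{\alpha^{d-1}}\cdots x^{\alpha}\bigr)$ since $\alpha^d=\id$. As $x^{\alpha^{d-1}}\cdots x^{\alpha}=\sigma_{\alpha}(x)x^{-1}$, this equals $x\sigma_{\alpha}(x)x^{-1}=\sigma_{\alpha}(x)^{\left(x^{-1}\right)}$. When $G$ is abelian the right-hand side is just $\sigma_{\alpha}(x)$, so $\sigma_{\alpha}$ takes values in $\fix(\alpha)$; and expanding $\sigma_{\alpha}(xy)$ using $(xy)^{\alpha^i}=x^{\alpha^i}y^{\alpha^i}$ together with commutativity yields $\sigma_{\alpha}(xy)=\sigma_{\alpha}(x)\sigma_{\alpha}(y)$.

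For statement (3), since $G$ is centerless the homomorphism $\conj_G$ is injective, so $\ord(\alpha)=\ord(r)=:d$. Substituting $x^{\alpha^i}=r^{-i}xr^i$ into $\sigma_{\alpha}(x)$ and telescoping — an easy induction on $k$ gives that the product of the first $k$ factors equals $r^{-(d-1)}(xr)^{k-1}xr^{d-k}$ — shows $\sigma_{\alpha}(x)=r^{-(d-1)}(xr)^{d}r^{-1}=r^{-(d-1)}(xr)^{d}r^{d-1}$, the last equality using $r^d=e$. Thus $\sigma_{\alpha}(x)$ is conjugate to $(xr)^d$, so $\ord(\sigma_{\alpha}(x))=\ord\bigl((xr)^d\bigr)=\ord(xr)/\gcd(\ord(xr),d)$, and statement (1) gives $\ord(\A_{\alpha,x})=d\cdot\ord(xr)/\gcd(\ord(xr),d)=\lcm(d,\ord(xr))=\lcm(\ord(r),\ord(xr))$. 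Finally, as $x$ ranges over $G$ so does $xr$, whence $\Ffrak(\alpha)=\lcm_{x\in G}\lcm(\ord(r),\ord(xr))=\lcm(\ord(r),\Exp(G))=\Exp(G)$.

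None of these steps is a genuine obstacle — the arguments are elementary manipulations in $\Hol(G)$ and in $G$. The only points that require care are the exponent bookkeeping modulo $d$ in the block-splitting of part (1) and the placement of the cyclic reductions $\alpha^d=\id$ and $r^d=e$ in parts (2) and (3), where applying the reduction at the wrong moment would break the telescoping.
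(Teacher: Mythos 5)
Your proof is correct, and it supplies in full the argument the paper only cites: the paper's proof of this lemma consists of references to \cite{Bor17a} (a ``power computation in $\Hol(G)$'' for (1), and Lemmas 3.1.3(1) and 3.1.6(1) there for (2) and (3)), and your iteration formula with the block-splitting $p_{dk}(x)=\sigma_{\alpha}(x)^k$, the termwise application of $\alpha$ for (2), and the telescoping identity $\sigma_{\alpha}(x)=r^{-(d-1)}(xr)^d r^{d-1}$ for (3) are precisely those computations made explicit for the right-action convention used here. All steps check, including the two delicate points: centerlessness is used only to guarantee $\ord(\alpha)=\ord(r)$ (so that $r^d=1_G$ and the conjugating element $r^{-(d-1)}=r^{-1}r^{d}\cdot r^{-1}\cdots$ bookkeeping is consistent), and the identity $x\sigma_{\alpha}(x)x^{-1}=\sigma_{\alpha}(x)^{\left(x^{-1}\right)}$ is exactly the right-conjugation convention $g^h=h^{-1}gh$.
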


\begin{proof}
Statement (1) follows from a power computation in $\Hol(G)$, see \cite[paragraph after Lemma 3.1.3]{Bor17a}, where affine maps were defined via left actions. For statement (2), see \cite[Lemma 3.1.3(1)]{Bor17a}, and for statement (3), see \cite[Lemma 3.1.6(1)]{Bor17a}.
\end{proof}

\subsection{A few more basic results}\label{subsec2P4}

The following simple lemma is useful when studying automorphisms of finite groups with a proper nontrivial characteristic subgroup $N$, especially when some nontrivial information is available on the restriction $\alpha_{\mid N}$ or the automorphism of $G/N$ induced by $\alpha$:

\begin{lemmma}\label{abKerLem}
Let $G$ be a finite group, let $\alpha$ be an automorphism of $G$ and let $N$ be an $\alpha$-invariant normal subgroup of $G$. Denote by $\gamma:G\rightarrow\Aut(N)$ the conjugation action of $G$ on $N$. Then for all $g\in G$, we have $(g^{\gamma})^{\alpha_{\mid N}}=(g^{\alpha})^{\gamma}$.
\end{lemmma}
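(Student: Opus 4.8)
The plan is to unwind both sides of the claimed identity
\[
(g^{\gamma})^{\alpha_{\mid N}}=(g^{\alpha})^{\gamma}
\]
as automorphisms of $N$, evaluate each at an arbitrary $x\in N$, and check that they produce the same element of $N$. Recall that $g^{\gamma}=\conj_G(g)_{\mid N}$, which makes sense because $N$ is normal, so $x^{(g^{\gamma})}=g^{-1}xg$ for $x\in N$. The left-hand side is a conjugate of this automorphism by $\alpha_{\mid N}$ inside $\Aut(N)$, i.e. $(g^{\gamma})^{\alpha_{\mid N}}=\alpha_{\mid N}^{-1}\circ g^{\gamma}\circ\alpha_{\mid N}$ (with the composition order dictated by the right-action convention of the paper). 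The right-hand side is simply $\conj_G(g^{\alpha})_{\mid N}$, acting by $x\mapsto (g^{\alpha})^{-1}x\,g^{\alpha}$.

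First I would fix $x\in N$ and compute $x^{((g^{\gamma})^{\alpha_{\mid N}})}$. Applying $\alpha^{-1}$, then conjugation by $g$, then $\alpha$, and using that $\alpha$ is a group homomorphism on $G$ that restricts to $N$ (since $N$ is $\alpha$-invariant), this equals
\[
\bigl(g^{-1}\cdot x^{(\alpha^{-1})}\cdot g\bigr)^{\alpha}
= (g^{-1})^{\alpha}\cdot x\cdot g^{\alpha}
= (g^{\alpha})^{-1}\, x\, g^{\alpha},
\]
where in the last two steps I used multiplicativity of $\alpha$ and the fact that $\alpha$ commutes with inversion. But $(g^{\alpha})^{-1}xg^{\alpha}$ is exactly $x^{((g^{\alpha})^{\gamma})}$, so the two automorphisms agree on every $x\in N$, hence are equal. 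This completes the argument.

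There is essentially no obstacle here: the statement is the familiar compatibility between an automorphism and the conjugation map, i.e. $\alpha\circ\conj_G(g)\circ\alpha^{-1}=\conj_G(g^{\alpha})$, restricted to an invariant normal subgroup. The only points requiring a modicum of care are bookkeeping ones — getting the order of composition right given that the paper writes automorphism actions on the right (so $x^{(\alpha\beta)}=(x^{\alpha})^{\beta}$), and noting explicitly that $\alpha$-invariance of $N$ is what licenses us to speak of $\alpha_{\mid N}\in\Aut(N)$ and to evaluate $\gamma$ before and after applying $\alpha$. I would present the computation in a single displayed chain of equalities as above, prefaced by the remark that it suffices to check equality of the two automorphisms pointwise on $N$.
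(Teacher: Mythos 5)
Your proposal is correct and is essentially identical to the paper's proof: both interpret $(g^{\gamma})^{\alpha_{\mid N}}$ as the $\Aut(N)$-conjugate $(\alpha_{\mid N})^{-1}\,g^{\gamma}\,\alpha_{\mid N}$ and verify pointwise on $N$ that applying $\alpha^{-1}$, conjugating by $g$, and applying $\alpha$ yields conjugation by $g^{\alpha}$. No differences of substance.
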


\begin{proof}
Let $n\in N$. Then
\begin{align*}
n^{\left((g^{\gamma})^{\alpha_{\mid N}}\right)} &=n^{(\alpha_{\mid N})^{-1}g\alpha_{\mid N}}=(n^{(\alpha^{-1})})^{g\alpha_{\mid N}}=(g^{-1}n^{(\alpha^{-1})}g)^{\alpha_{\mid N}}=(g^{\alpha})^{-1}ng^{\alpha} \\
&=n^{(g^{\alpha})}=n^{\left((g^{\alpha})^{\gamma}\right)}.
\end{align*}
\end{proof}

Recall the Frobenius normal form (or rational canonical form) of vector space endomorphisms (see e.g.~\cite[Theorem 4.17, p.~238]{AW92a}): If $K$ is a field, $V$ is a finite-dimensional $K$-vector space and $\varphi\in\End_K(V)$, then $\varphi$ is $\Aut_K(V)$-conjugate to a block diagonal matrix whose blocks each are of the form $\Comp(P(X))$ where $P(X)=X^d+a_{d-1}X^{d-1}+\cdots+a_1X+a_0$ is a monic univariate polynomial over $K$ and $\Comp(P(X))$, the so-called \emph{companion matrix of $P(X)$}, is of the form
\[
\begin{pmatrix}0 & 0 & \cdots & 0 & -a_0 \\ 1 & 0 & \cdots & 0 & -a_1 \\ 0 & 1 & \cdots & 0 & -a_2 \\ \vdots & \vdots & \cdots & \vdots & \vdots \\ 0 & 0 & \cdots & 1 & -a_{d-1}\end{pmatrix}.
\]
Equivalently, $\Comp(P(X))$ is the matrix representation of the $K$-endomorphism
\[
R(X)+(P(X))\mapsto XR(X)+(P(X))
\]
of $K[X]/(P(X))$ with respect to the ordered basis $(X^i+(P(X)))_{i=0,1,\ldots,d-1}$. Therefore, the Chinese remainder theorem implies that if $P(X)=Q_1(X)^{e_1}\cdots Q_r(X)^{e_r}$ is the essentially unique factorization of $P(X)$ into pairwise coprime powers of monic irreducible polynomials, then $\Comp(P(X))$ is similar to a block diagonal matrix whose blocks are $\Comp(Q_i(X)^{e_i})$ for $i=1,\ldots,s$. This shows that in fact, each $K$-endomorphism $\varphi$ of $V$ has a block diagonal form each of whose blocks is the companion matrix of a power of an irreducible monic polynomial from $K[X]$. This normal form of $\varphi$, which is unique up to reordering the blocks, is called the \emph{primary rational canonical form of $\varphi$}, and its diagonal blocks are called the \emph{primary Frobenius blocks of $\varphi$}.

One of the many applications of (primary) rational canonical forms is the determination of matrix centralizers:

\begin{lemmma}\label{centLem}
Let $K$ be a field, let $V$ be a finite-dimensional $K$-vector space, let $\varphi\in\End_K(V)$ with primary Frobenius blocks $\Comp(P_i(X)^{e_i})$ for $i=1,\ldots,s$, corresponding to a direct decomposition $V=U_1\oplus U_2\oplus\cdots\oplus U_s$. Let $X$ be a variable, and view $V$ and each $U_i$ as a $K[X]$-module by defining $X\cdot v:=v^{\varphi}$ for $v\in V$. Let $\psi\in\End_K(V)$. The following are equivalent:
\begin{enumerate}
\item $\psi\in\C_{\End_K(V)}(\varphi)$.
\item There is an $(s\times s)$-matrix $(\pi_{i,j})_{i,j=1,\ldots,s}$ such that for all $i,j\in\{1,\ldots,s\}$, we have $\pi_{i,j}\in\Hom_{K[X]}(U_j,U_i)$, and such that for each $v\in V$, writing $v=u_1+\cdots+u_s$ with $u_i\in U_i$ for $i=1,\ldots,s$, we have $v^{\psi}=\sum_{i,j=1}^s{\pi_{i,j}(u_j)}$.
\end{enumerate}
In particular, if the polynomials $P_i(X)$ for $i=1,\ldots,s$ are pairwise distinct, then
\begin{align*}
\C_{\Aut_K(V)}(\varphi) &=\C_{\Aut_K(U_1)}(\varphi_{\mid U_1})\times\cdots\times\C_{\Aut_K(U_s)}(\varphi_{\mid U_s}) \\
&=\Aut_{K[X]}(U_1)\times\cdots\times\Aut_{K[X]}(U_s) \\
&\cong (K[X]/(P_1(X)^{e_1}))^{\ast}\times\cdots\times(K[X]/(P_s(X)^{e_s}))^{\ast}.
\end{align*}
\end{lemmma}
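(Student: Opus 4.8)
The plan is to translate the centralizer condition into a statement about $K[X]$-module endomorphisms and then exploit the block structure. The first step is the observation that, directly from the definition $X\cdot v:=v^{\varphi}$ of the $K[X]$-module structure on $V$, a $K$-endomorphism $\psi$ of $V$ lies in $\C_{\End_K(V)}(\varphi)$ if and only if $(v^{\varphi})^{\psi}=(v^{\psi})^{\varphi}$ for all $v\in V$, i.e., if and only if $\psi$ is $K[X]$-linear; thus $\C_{\End_K(V)}(\varphi)=\End_{K[X]}(V)$.

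Next I would describe $\End_{K[X]}(V)$ via the decomposition $V=U_1\oplus\cdots\oplus U_s$. The inclusions $U_j\hookrightarrow V$ and projections $V\twoheadrightarrow U_i$ are $K[X]$-linear because the $U_i$ are $\varphi$-invariant, so for $\psi\in\End_{K[X]}(V)$ the composites $\pi_{i,j}\colon U_j\hookrightarrow V\xrightarrow{\psi}V\twoheadrightarrow U_i$ lie in $\Hom_{K[X]}(U_j,U_i)$, and writing $v=u_1+\cdots+u_s$ and then decomposing $v^{\psi}$ along the $U_i$ gives $v^{\psi}=\sum_{i,j=1}^s\pi_{i,j}(u_j)$. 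Conversely, given any matrix $(\pi_{i,j})$ of $K[X]$-homomorphisms, the assignment $v\mapsto\sum_{i,j}\pi_{i,j}(u_j)$ is $K$-linear and commutes with the $X$-action, hence lies in $\End_{K[X]}(V)=\C_{\End_K(V)}(\varphi)$. This yields the equivalence of (1) and (2).

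For the \enquote{in particular} statement, I would first note that each $U_i$ is a cyclic $K[X]$-module with annihilator ideal $(P_i(X)^{e_i})$ -- indeed, in the basis realizing $\varphi_{\mid U_i}$ as $\Comp(P_i(X)^{e_i})$ the first basis vector generates $U_i$ -- so $U_i\cong K[X]/(P_i(X)^{e_i})$. If the $P_i$ are pairwise distinct monic irreducibles, then for $i\neq j$ the polynomials $P_i(X)^{e_i}$ and $P_j(X)^{e_j}$ are coprime, and the $K[X]$-module $\Hom_{K[X]}(U_j,U_i)$ is annihilated by both of them (the former because $U_i$ is, the latter because $U_j$ is), hence by $\gcd(P_i(X)^{e_i},P_j(X)^{e_j})=1$; thus $\Hom_{K[X]}(U_j,U_i)=0$. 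Consequently the matrix $(\pi_{i,j})$ in part (2) is diagonal, so each $\psi\in\C_{\End_K(V)}(\varphi)$ is the direct sum of its restrictions $\psi_{\mid U_i}=\pi_{i,i}\in\End_{K[X]}(U_i)$, giving $\C_{\End_K(V)}(\varphi)=\prod_{i=1}^s\End_{K[X]}(U_i)$. Passing to unit groups (a block-diagonal $\psi$ is invertible iff each block is) yields $\C_{\Aut_K(V)}(\varphi)=\prod_{i=1}^s\Aut_{K[X]}(U_i)=\prod_{i=1}^s\C_{\Aut_K(U_i)}(\varphi_{\mid U_i})$, and finally $\End_{K[X]}(U_i)\cong K[X]/(P_i(X)^{e_i})$ as rings -- a $K[X]$-endomorphism of the cyclic module $K[X]/(P_i(X)^{e_i})$ being multiplication by the image of $1$ -- so $\Aut_{K[X]}(U_i)=\End_{K[X]}(U_i)^{\ast}\cong(K[X]/(P_i(X)^{e_i}))^{\ast}$.

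The only step that is not pure bookkeeping is the vanishing $\Hom_{K[X]}(U_j,U_i)=0$ for distinct primary blocks, but as sketched this is immediate from coprimality once the $U_i$ are identified with the cyclic modules $K[X]/(P_i(X)^{e_i})$; everything else is routine manipulation with block matrices together with the elementary computation of the endomorphism ring of a cyclic module. I therefore expect no real obstacle, the main point requiring care being consistency of the left/right action conventions and of the column convention for $\Comp$ when verifying that the first basis vector of a block generates it cyclically.
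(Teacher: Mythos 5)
Your proof is correct and follows essentially the same route as the paper: both rest on the identification $\C_{\End_K(V)}(\varphi)=\End_{K[X]}(V)$, the block decomposition of such endomorphisms, the vanishing of $\Hom_{K[X]}(U_j,U_i)$ for distinct primary blocks, and the identification of each $U_i$ with the cyclic module $K[X]/(P_i(X)^{e_i})$. The only difference is that the paper cites Adkins--Weintraub (Proposition 3.3.15 and Lemma 5.5.2) for these facts, whereas you prove them directly; your coprimality argument for the $\Hom$ vanishing and the multiplication-by-the-image-of-$1$ description of the endomorphism ring of a cyclic module are precisely the standard proofs of the cited results.
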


\begin{proof}
The main statement is a special case of \cite[Proposition 3.3.15]{AW92a}, noting that $\C_{\End_K(W)}(\varphi)=\End_{K[X]}(W)$ for $W\in\{V,U_1,\ldots,U_s\}$ under the specified $K[X]$-module structures. For the \enquote{In particular}, note that by \cite[Lemma 5.5.2]{AW92a}, if the $P_i(X)$ are pairwise distinct, then $\Hom_{K[X]}(U_j,U_i)=\{0\}$ for $i\not=j$, whence
\[
\C_{\End_K(V)}(\varphi)=\C_{\End_K(U_1)}(\varphi_{\mid U_1})\times\cdots\times\C_{\End_K(U_s)}(\varphi_{\mid U_s}),
\]
and an element of the right-hand side is an automorphism of $K$ if and only if each of its restrictions to one of the subspaces $U_i$ is an automorphism of that subspace. Finally, that $\Aut_{K[X]}(U_i)$ is isomorphic to $(K[X]/(P_i(X)^{e_i}))^{\ast}$ follows from \cite[Lemma 5.5.2]{AW92a} and the $K[X]$-module isomorphism $U_i\cong K[X]/(P_i(X)^{e_i})$ noted in the text passage after the proof of Lemma \ref{abKerLem} above.
\end{proof}

The order of an invertible matrix $M$ over a finite field can also be read off from its primary rational canonical form:

\begin{propposition}\label{frobOrdProp}
Let $K$ be a finite field of characteristic $p$.
\begin{enumerate}
\item Let $P(X)\in K[X]$ be monic and irreducible such that $P(0)\not=0$, and let $k\in\IN^+$. Let $\alpha\in\overline{K}$ be any root of $P(X)$, and denote by $\ord(\alpha)$ the order of $\alpha$ in $\overline{K}^{\ast}$. Then $\ord(\Comp(P(X)^e))=\ord(\alpha)\cdot p^{\lceil\log_p(e)\rceil}$.
\item If $M$ is an invertible square matrix over $K$ and $\Comp(P_i(X)^{e_i})$ for $i=1,\ldots,s$ are the primary Frobenius blocks of $M$, then $P_i(0)\not=0$ for all $i=1,\ldots,s$ and $\ord(M)=\lcm\{\ord(\Comp(P_i(X)^{e_i}))\mid i=1,\ldots,s\}$.
\end{enumerate}
\end{propposition}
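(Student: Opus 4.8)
The plan is to prove statement (1) first by reinterpreting the companion matrix as a multiplication operator. Recall from the text preceding Lemma \ref{centLem} that $\Comp(P(X)^e)$ is the matrix of the $K$-endomorphism $R(X)+(P(X)^e)\mapsto XR(X)+(P(X)^e)$ of $R:=K[X]/(P(X)^e)$. Since $P(X)$ is irreducible and $P(0)\not=0$, we have $\gcd(X,P(X)^e)=1$, so $X$ is a unit of $R$ and $\Comp(P(X)^e)$ is invertible with order equal to the multiplicative order of $X+(P(X)^e)$ in $R^{\ast}$ — that is, the least $n\in\IN^+$ with $X^n\equiv 1\Mod{P(X)^e}$, equivalently the least $n$ with $P(X)^e\mid X^n-1$ in $K[X]$.

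Next I would determine this least $n$ by computing, for arbitrary $n\in\IN^+$, the exact multiplicity of $P(X)$ as a divisor of $X^n-1$. The crucial observation is that $\ord(\alpha)$, being the order of an element of $\overline{K}^{\ast}$ (a torsion group all of whose element orders divide some $p^k-1$), is coprime to $p$. Writing $n=n_{p'}\cdot n_p$ with $n_p=p^a$, the characteristic-$p$ identity $X^n-1=(X^{n_{p'}}-1)^{p^a}$ applies, and since $p\nmid n_{p'}$ the polynomial $X^{n_{p'}}-1$ is separable (its formal derivative $n_{p'}X^{n_{p'}-1}$ is coprime to it). Hence $P(X)$ divides $X^n-1$ with multiplicity exactly $p^a$ if $P(X)\mid X^{n_{p'}}-1$, and with multiplicity $0$ otherwise. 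Now $P(X)\mid X^{n_{p'}}-1$ if and only if $\alpha^{n_{p'}}=1$, i.e.\ $\ord(\alpha)\mid n_{p'}$, which (as $\ord(\alpha)$ is prime to $p$) is the same as $\ord(\alpha)\mid n$. Consequently $P(X)^e\mid X^n-1$ if and only if $\ord(\alpha)\mid n$ and $p^a\geq e$, and the smallest such $n$ is $\ord(\alpha)\cdot p^{\lceil\log_p(e)\rceil}$, which is statement (1). (Alternatively one can argue via the multiplicative Jordan decomposition of $\Comp(P(X)^e)$ in $\GL$: its semisimple part has the Galois-conjugate roots of $P(X)$ as eigenvalues, hence order $\ord(\alpha)$, and its unipotent part is a single Jordan block of size $e$ in the $P(X)$-primary part, which has order $p^{\lceil\log_p(e)\rceil}$.)

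For statement (2), by the discussion of the primary rational canonical form, $M$ is $\Aut_K(V)$-conjugate to the block diagonal matrix with blocks $\Comp(P_i(X)^{e_i})$; conjugation preserves order, and the order of a block diagonal matrix, viewed as an element of the direct product of the general linear groups of the blocks, is the $\lcm$ of the orders of the blocks, which yields the displayed formula once each block is known to be invertible. Invertibility of the blocks follows from that of $M$: using $\det\Comp(Q(X))=(-1)^{\deg{Q(X)}}Q(0)$ for monic $Q(X)$, we get $0\not=\det(M)=\pm\prod_{i=1}^s P_i(0)^{e_i}$, so $P_i(0)\not=0$ for all $i$, and then statement (1) applies to each block.

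The main obstacle is the multiplicity computation in statement (1): pinning down that the exponent of $P(X)$ in $X^n-1$ is controlled purely by the $p$-part of $n$ once $\ord(\alpha)\mid n$. The key inputs here are the coprimality of $\ord(\alpha)$ to $p$ and the separability of $X^{n_{p'}}-1$; the rest is bookkeeping.
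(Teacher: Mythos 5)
Your proof is correct, but it takes a different route from the paper: the paper's entire proof is a one-line citation of \cite[Theorem 3.8]{LN97a} (the order of a power $P(X)^e$ of an irreducible polynomial) and \cite[Theorems 4 and 5]{Her05a} (the order of a matrix as the $\lcm$ of the orders of its primary Frobenius blocks), whereas you reprove these facts from scratch. Your argument for (1) --- identifying $\Comp(P(X)^e)$ with multiplication by $X$ on $K[X]/(P(X)^e)$, so that its order is the least $n$ with $P(X)^e\mid X^n-1$, and then pinning down the multiplicity of $P(X)$ in $X^n-1$ via $X^n-1=(X^{n_{p'}}-1)^{n_p}$ and the separability of $X^{n_{p'}}-1$ --- is essentially the standard proof hidden behind the Lidl--Niederreiter citation, and your treatment of (2) (conjugation invariance of the order, $\lcm$ over the diagonal blocks, and $\det\Comp(Q(X))=(-1)^{\deg Q(X)}Q(0)$ to get $P_i(0)\neq0$ from the invertibility of $M$) likewise reconstructs what the paper outsources to Hern\'andez-Toledo. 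The paper's approach buys brevity; yours buys a self-contained and transparent verification, including a clean determinant argument for the claim $P_i(0)\neq 0$, which the cited sources only give implicitly. All steps check out (in particular the reduction ``$\ord(\alpha)\mid n_{p'}$ iff $\ord(\alpha)\mid n$'' is justified by the coprimality of $\ord(\alpha)$ to $p$), so there is no gap; note only that the ``single Jordan block'' phrasing in your parenthetical alternative should read ``one Jordan block of size $e$ for each root of $P(X)$,'' though this does not affect the main argument.
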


\begin{proof}
This follows from \cite[Theorem 3.8]{LN97a} and \cite[Theorems 4 and 5]{Her05a}.
\end{proof}

We conclude this subsection with the following classification of automorphisms $\alpha$ of finite elementary abelian groups $G$ such that $\ord(\alpha)>\frac{1}{2}|G|$:

\begin{propposition}\label{largeOrdProp}
Let $p$ be a prime, let $G$ be a finite elementary abelian $p$-group of order $p^n$, and let $\alpha\in\Aut(G)$. The following are equivalent:
\begin{enumerate}
\item $\ord(\alpha)>\frac{1}{2}|G|$.
\item One of the following holds:
\begin{itemize}
\item $p>2$, and $\alpha$ is a Singer cycle on $G$, i.e., the primary rational canonical form of $\alpha$ has a single block, of the form $\Comp(P(X))$ where $P(X)\in\IF_p[X]$ is a monic \emph{primitive} irreducible polynomial of degree $n$ (i.e., its roots in $\overline{\IF_p}$ are generators of the cyclic group $\IF_{p^n}^{\ast}$). In this case, $\ord(\alpha)=p^n-1$.
\item $p>2$, $n=2$, and the primary rational canonical form of $\alpha$ has a single block, of the form $\Comp((X-a)^2)$ where $a\in\IF_p^{\ast}$ is of order $p-1$. In this case, $\ord(\alpha)=p(p-1)$.
\item $p=2$, and the blocks of the primary rational canonical form of $\alpha$ are Singer cycles in pairwise coprime dimensions $d_1,d_2,\ldots,d_s>1$ such that $\prod_{i=1}^s{(1-\frac{1}{2^{d_i}})}>\frac{1}{2}$. In this case, $\ord(\alpha)=\prod_{i=1}^s{(2^{d_i}-1)}$.
\end{itemize}
\end{enumerate}
\end{propposition}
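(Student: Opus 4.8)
The plan is to reduce everything to the order formula in Proposition \ref{frobOrdProp}, so I first write $\alpha$ in primary rational canonical form, with primary Frobenius blocks $\Comp(P_i(X)^{e_i})$ in dimensions $d_ie_i$ (where $d_i=\deg P_i$), corresponding to a decomposition $G=\bigoplus_i U_i$ with $|U_i|=p^{d_ie_i}$ and $\sum_i d_ie_i=n$. Let $\alpha_i\in\overline{\IF_p}$ be a root of $P_i(X)$. By Proposition \ref{frobOrdProp}, $\ord(\alpha)=\lcm_i\bigl(\ord(\alpha_i)\,p^{\lceil\log_p e_i\rceil}\bigr)$, and since $\ord(\alpha_i)\mid p^{d_i}-1$ we have $\ord(\alpha_i)\leq p^{d_i}-1$, with $p^{\lceil\log_p e_i\rceil}\leq p^{e_i-1}$ for $e_i\geq 1$ (strict unless $e_i\in\{1,p\}$, roughly). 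So $\ord(\Comp(P_i(X)^{e_i}))\leq (p^{d_i}-1)p^{e_i-1}<p^{d_ie_i}=|U_i|$ in every case, and I will want the product bound $\ord(\alpha)\leq\lcm_i\ord(\Comp(P_i(X)^{e_i}))\leq\prod_i\ord(\Comp(P_i(X)^{e_i}))$. The strategy is: to have $\ord(\alpha)>\frac12|G|=\frac12\prod_i|U_i|$, each factor must be close to $|U_i|$ and the $\lcm$ must essentially equal the product (forcing pairwise coprimality of the block orders), which is extremely restrictive.

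The heart of the argument is a case analysis on the shape of each block under the constraint $\ord(\Comp(P_i(X)^{e_i}))/|U_i|$ large. First, a single block $\Comp(P(X)^e)$ with $d=\deg P$: its order is $\ord(\alpha)p^{\lceil\log_p e\rceil}$, so its ratio to $p^{de}$ is at most $(1-p^{-d})p^{\lceil\log_p e\rceil - e}$. For this to exceed $\frac12$ one needs $p^{\lceil\log_p e\rceil}/p^e$ not too small; writing $e$ in terms of $p$ one checks $p^{\lceil\log_p e\rceil}\le p^{e-1}$ with equality iff $e\le p$, and one needs $e$ small. A direct check shows the only blocks $\Comp(P(X)^e)$ with $\ord>\frac12 p^{de}$ are: $e=1$ with $P$ primitive (ratio $1-p^{-d}$); and, when $p>2$, $e=2$, $d=1$, $P=X-a$ with $a$ of order $p-1$ (ratio $(p-1)p/p^2=(p-1)/p>\frac12$ iff $p>2$); plus the degenerate $n\le$ small cases. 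For $p=2$ the only admissible single block is a Singer cycle ($e=1$, primitive). Then I handle \emph{several} blocks: since $\ord(\alpha)=\lcm_i o_i$ with $o_i<|U_i|$, if two blocks are both present then $\lcm_i o_i\le \tfrac12\prod_i o_i\cdot 2$ forces, after bookkeeping, that all $o_i$ are pairwise coprime and each ratio $o_i/|U_i|$ is as large as possible; pairwise-coprime Singer-block orders $2^{d_i}-1$ force pairwise coprime dimensions $d_i$ (using $\gcd(2^a-1,2^b-1)=2^{\gcd(a,b)}-1$), $d_i>1$ (a $1$-dimensional block over $\IF_2$ is trivial), and the product condition $\prod_i(1-2^{-d_i})>\frac12$; and for $p>2$ a multi-block configuration cannot beat $\frac12$ because each Singer ratio $1-p^{-d_i}$ already forces, together with coprimality and $p\mid p^{d_i}-1$ failing, that at most one block can carry the weight — I will show $p>2$ with $\ge2$ nontrivial blocks gives ratio $\le(1-p^{-d_1})(1-p^{-d_2})<\frac12$ once $p\ge3$, except it never survives the coprimality check either. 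Conversely, for each listed family I verify directly that $\ord(\alpha)>\frac12|G|$ using the stated order formulas.

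The main obstacle I anticipate is the bookkeeping in the multi-block case: carefully showing that $\ord(\alpha)=\lcm_i o_i>\frac12\prod_i|U_i|$ together with $o_i<|U_i|$ forces the $o_i$ pairwise coprime (so $\lcm=\prod$) \emph{and} each $o_i/|U_i|$ near its maximum, and then translating ``pairwise coprime $o_i$'' back into ``pairwise coprime dimensions $d_i$'' for Singer blocks while ruling out non-Singer blocks and mixed $p>2$ configurations. The inequality $\prod_i(1-p^{-d_i})>\frac12$ is the clean surviving constraint for $p=2$; for $p>2$ one must additionally rule out the small-$n$ coincidences (e.g. whether $p(p-1)$ being coprime to some other block order could help — it cannot, since $p\mid p(p-1)$ and $p\nmid p^{d}-1$, so the $e=2$ block could in principle combine with Singer blocks, and I need to check the product ratio $\tfrac{p-1}{p}\prod(1-p^{-d_i})$ never exceeds $\frac12$ for $p>2$, which it doesn't because $\tfrac{p-1}{p}\le\tfrac23<1$ times anything $<1$ against the competing factor $p/p^2$ lost — a short numeric check). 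Everything else is a routine application of Propositions \ref{frobOrdProp} and the elementary estimates $\lcm\le\prod$ and $p^{\lceil\log_p e\rceil}\le p^{e-1}$.
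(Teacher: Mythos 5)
Your route is genuinely different from the paper's: the paper does not argue directly at all, but simply quotes the known classification literature (it notes that a finite group with an automorphism of order exceeding half the group order is abelian by \cite[Theorem 1.1.1(1)]{Bor17a}, that for such groups maximal cycle length equals automorphism order by \cite[Corollary 1]{Hor74a}, and then invokes the classification of automorphisms with a large cycle in \cite[Corollary 1.1.8]{Bor16a}). A self-contained proof via the primary rational canonical form and Proposition \ref{frobOrdProp}, as you propose, is a legitimate alternative, and its skeleton is sound: each primary block satisfies $\ord(\Comp(P_i(X)^{e_i}))<|U_i|$, so if two block orders shared a prime $\ell$ one would get $\ord(\alpha)\leq\frac{1}{\ell}\prod_i o_i<\frac{1}{2}|G|$; hence the block orders are pairwise coprime, $\ord(\alpha)=\prod_i o_i$, and each ratio $o_i/|U_i|$ must individually exceed $\frac{1}{2}$, which your single-block analysis correctly reduces to Singer blocks and (for $p>2$) the block $\Comp((X-a)^2)$ with $\ord(a)=p-1$. (Your displayed inequality \enquote{$\lcm_i o_i\le\frac12\prod_i o_i\cdot 2$} should be replaced by the $\ell$-divisor argument just stated.)

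The concrete flaw is in how you close the multi-block case for $p>2$: both numeric claims you lean on are false. For $p=3$, $d_1=1$, $d_2=2$ one has $(1-p^{-d_1})(1-p^{-d_2})=\frac{2}{3}\cdot\frac{8}{9}=\frac{16}{27}>\frac{1}{2}$, so \enquote{$(1-p^{-d_1})(1-p^{-d_2})<\frac12$ once $p\ge3$} fails, and the same example refutes \enquote{$\frac{p-1}{p}\prod_i(1-2^{-d_i}\text{-type factors})\le\frac12$} for the $\Comp((X-a)^2)$-plus-Singer configuration. Moreover you explicitly dismiss the coprimality obstruction for that configuration by checking only the prime $p$; but the point is the prime divisors of $p-1$: for odd $p$ every admissible block order is $p^{d}-1$ or $p(p-1)$, and all of these are divisible by $p-1\geq 2$ (in particular all are even). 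So any two admissible blocks violate the pairwise coprimality you have already forced, and for $p>2$ there can be only one block -- which is exactly the statement you need, with no product-of-ratios estimate at all. With that repair (and keeping your $\gcd(2^a-1,2^b-1)=2^{\gcd(a,b)}-1$ argument for $p=2$), the plan goes through.
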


\begin{proof}
This follows from the known classification of the pairs $(H,\alpha)$ where $H$ is a finite group and $\alpha$ is an automorphism of $H$ with $\ord(\alpha)>\frac{1}{2}|H|$. Note that each such group $H$ is abelian by \cite[Theorem 1.1.1(1)]{Bor17a}, and thus by \cite[Corollary 1]{Hor74a}, these are just the pairs $(H,\alpha)$ where $\alpha$ has a cycle of length greater than $\frac{1}{2}|H|$, for the classification of which see \cite[Corollary 1.1.8]{Bor16a}.
\end{proof}

\section{Proof of Theorem \ref{mainTheo}(2)}\label{sec3}

The meat of the proof of Theorem \ref{mainTheo}(2) lies in proving the following proposition:

\begin{proposition}\label{mainProp}
Let $G$ be a finite solvable group with $\ffrak(G)>\frac{1}{2}$. Then the derived length of $G$ is at most $3$.
\end{proposition}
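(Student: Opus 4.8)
The plan is to induct on $|G|$, using the characteristic subgroup $N := \Fit(G)$ (or, if one prefers to avoid the Fitting subgroup, the smallest nontrivial characteristic subgroup, which in a solvable group is elementary abelian) together with the submultiplicativity $\ffrak(G) \le \ffrak(N)\cdot\ffrak(G/N)$ from Lemma \ref{fLem}(3). The base of the analysis is the case where $G$ is itself elementary abelian, which is trivial (derived length $\le 1$), and more substantively the case where $G$ has a \emph{unique} minimal characteristic subgroup, so that $\Fit(G)$ is elementary abelian and $G$ acts faithfully on it; this is where Proposition \ref{largeOrdProp} and the structure of matrix centralizers (Lemma \ref{centLem}, Proposition \ref{frobOrdProp}) do the real work.

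\textbf{Key steps.} First I would reduce to the case that $G$ has a minimal characteristic subgroup $V$ that is elementary abelian, say a $p$-group, and $C := \C_G(V)$. Since $V$ is characteristic and abelian, $\ffrak(G/V) \ge \ffrak(G) > \tfrac12$, so by induction $G/V$ has derived length $\le 3$; the task is to promote this to derived length $\le 3$ for $G$, i.e.\ to show $G^{(3)} = 1$, which will follow if we can show $G''$ is contained in $V$ (then $G^{(3)} \subseteq [V,V]\cdots$, but $V$ is abelian, so actually we need $G'' \subseteq V$ only when combined with something — more carefully, we want $G^{(3)}=1$, and $G/V$ solvable of derived length $\le 3$ gives $G^{(3)} \subseteq V$; since $V$ is abelian this already gives derived length $\le 4$, so the genuine content is improving $4$ to $3$). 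To get the sharp bound $3$, I would argue that in fact $\ffrak(G) > \tfrac12$ forces $G/V$ to have derived length $\le 2$ \emph{or} forces $V$ to be central-ish, by a finer accounting: if $\alpha \in \Aut(G)$ achieves $\ffrak(\alpha) > \tfrac12$, then Lemma \ref{fLem}(2) gives $\Ffrak(\alpha) \le \Ffrak(\tilde\alpha)\cdot\Ffrak((\alpha_{\mid V})^{\Ffrak(\tilde\alpha)})$ where $\tilde\alpha$ is induced on $G/V$; dividing by $|G| = |V|\cdot|G/V|$ yields $\tfrac12 < \ffrak(\tilde\alpha)\cdot\big(\text{a factor} \le \ffrak(V)\big)$, and $\ffrak(V) \le 1$ with equality characterized by Proposition \ref{largeOrdProp} (a Singer-type action). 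So both $\ffrak(\tilde\alpha) > \tfrac12$ \emph{and} the $V$-factor is close to $1$; one then has to extract from "the $V$-factor is large" strong information about how $\alpha$, hence $G$, acts on $V$ — essentially that $V$ is a sum of Frobenius blocks on which a suitable power of $\alpha_{\mid V}$ acts as a Singer cycle in pairwise coprime dimensions, and that $C = \C_G(V)$ is abelian because an abelian-Singer-module has abelian automorphism group centralizer (Lemma \ref{centLem}: the centralizer is a product of multiplicative groups of fields $(\IF_p[X]/(P_i))^\ast$, which is abelian).

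\textbf{The crux.} The main obstacle is the bookkeeping that turns the multiplicative estimate $\tfrac12 < \ffrak(G) \le \ffrak(V)\cdot\ffrak(G/V)$ into the \emph{structural} conclusion "$G/C$ is abelian and $C$ is abelian, whence $G'' = 1$" — more precisely, showing that the action of $G$ on $V$ is by a group of matrices lying inside an abelian Singer-normalizer-type subgroup modulo a small index, and simultaneously that $G/V$ is not just of derived length $3$ but of derived length $2$ when $V$ is noncentral. I expect one needs the precise order formula $\ord(\Comp(P(X)^e)) = \ord(\alpha)\cdot p^{\lceil\log_p e\rceil}$ from Proposition \ref{frobOrdProp}, combined with the fact that for a faithful action the centralizer of $\alpha_{\mid V}$ in $\GL(V)$ must contain the image of $C$, to force $C$ abelian; and a separate, elementary argument (in the spirit of $\Ffrak(\conj_G(r)) = \Exp(G)$ from Lemma \ref{shiftLem}(3), i.e.\ analyzing affine maps whose linear part is inner) to pin down the top quotient. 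If the clean dichotomy "either $V$ central, or $G/V$ metabelian" fails to hold verbatim, the fallback is to prove $G^{(3)} \le V$ by the crude induction and then separately rule out $G^{(3)} = V \ne 1$ by showing that a noncentral minimal characteristic $V$ on which $G$ acts nontrivially but with $\ffrak(G) > \tfrac12$ cannot have $G$ of derived length exactly $4$, using that derived length $4$ would require $G'/(G'\cap V \cdot \text{stuff})$ too large, contradicting the Singer-dimension constraint $\prod_i(1 - 2^{-d_i}) > \tfrac12$ from Proposition \ref{largeOrdProp}, which severely limits $\dim V$ and hence $|G:C|$.
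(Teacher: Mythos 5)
Your induction gives derived length at most $4$ essentially for free ($V$ a minimal characteristic subgroup, $\ffrak(G/V)\geq\ffrak(G)>\frac{1}{2}$, so $G^{(3)}\leq V$ and $G^{(4)}=1$), but the whole content of the proposition is the passage from $4$ to $3$, and precisely there your proposal stops being a proof. The dichotomy \enquote{either $V$ is central or $G/V$ is metabelian} is never established, and the mechanisms you sketch for it do not work as stated. First, the abelianness of $\C_{\GL(V)}(\alpha_{\mid V})$ from Lemma \ref{centLem} says nothing about $C=\C_G(V)$: by definition $C$ acts trivially on $V$, so its image in $\GL(V)$ is trivial; constraining $C$ (or the image of $G$) requires a genuinely different argument, which in the paper is carried out via Lemma \ref{abKerLem} together with the coprimality of $\Ffrak(\tilde{\alpha})$ and $\ord(\alpha_{\mid N})$, resting on the \enquote{at most $p$ fixed points}, \enquote{centralizer of exponent $\ord(\alpha)$} and \enquote{no proper root} clauses of Proposition \ref{elAbFProp} -- facts your sketch never invokes. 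Second, the fallback's premise that $\prod_i(1-2^{-d_i})>\frac{1}{2}$ \enquote{severely limits $\dim V$} is false: for $p>2$ a Singer cycle exists in every dimension and already has order exceeding $\frac{1}{2}|V|$, and even for $p=2$ the pairwise coprime $d_i$ may be arbitrarily many distinct primes, so neither $\dim V$ nor $|G:C|$ is bounded by Proposition \ref{largeOrdProp}. Third, even granting your dichotomy, the \enquote{$V$ central} horn only yields $G^{(3)}\leq V\leq\zeta G$, hence again derived length at most $4$, not $3$. Finally, the conclusion you aim for at the crux ($G''=1$) is strictly stronger than the proposition and is exactly the paper's open Question \ref{ques1}, so a correct argument along your lines would in any case have to settle for less.

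What is missing, compared with the paper's route, is a structural statement strong enough to exclude derived length $4$. The paper obtains it in two steps: (i) every finite metabelian group $H$ with $\ffrak(H)>\frac{1}{2}$ has cyclic derived subgroup (Proposition \ref{fCyclicProp}, where the restrictions from Proposition \ref{elAbFProp} do the work), and (ii) the unconditional fact that in a solvable group of derived length $3$ the three derived-series factors cannot all be cyclic (Proposition \ref{integralProp}(2)). Applying (i) to $G/G''$, $G'/G'''$ and $G''$ (all of which inherit $\ffrak>\frac{1}{2}$ by Lemma \ref{fLem}(3)) and (ii) to $G'$ refutes derived length $4$. Your plan contains no analogue of either ingredient, and without one the induction stalls exactly at the $4$-to-$3$ step.
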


We start our observations for the proof of Proposition \ref{mainProp} with the following extension of Proposition \ref{largeOrdProp}:

\begin{proposition}\label{elAbFProp}
Let $p$ be a prime, let $G$ be a finite elementary abelian $p$-group with $|G|=p^n$ and $n>1$, and let $\alpha\in\Aut(G)$. The following are equivalent:
\begin{enumerate}
\item $\ffrak(\alpha)>\frac{1}{2}$.
\item One of the following holds:
\begin{itemize}
\item $\ord(\alpha)>\frac{1}{2}|G|$.
\item $p>2$, and the primary rational canonical form of $\alpha$ has precisely two blocks, one of the form $\Comp(X-1)$ and the other a Singer cycle (see Proposition \ref{largeOrdProp}) in dimension $n-1$.
\item $p=2$, and the primary rational canonical form of $\alpha$ has precisely one block of one of the two forms $\Comp(X-1)$ or $\Comp((X-1)^2)$, and the other blocks of $\alpha$ are Singer cycles in pairwise coprime dimensions $d_1,\ldots,d_s>1$ such that $\prod_{i=1}^s{(1-\frac{1}{2^{d_i}})}>\frac{1}{2}$.
\end{itemize}
\end{enumerate}
In particular, if $\ffrak(\alpha)>\frac{1}{2}$, then $\alpha$ has at most $p$ fixed points in $G$, the exponent of the centralizer of $\alpha$ in $\Aut(G)$ is $\ord(\alpha)$, and $\alpha$ does not admit a proper root in $\Aut(G)$.
\end{proposition}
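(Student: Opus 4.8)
The plan is to reduce the statement to two facts about the rational canonical form of $\alpha$, proved by elementary commutative algebra. Since $G$ is elementary abelian, Lemma~\ref{shiftLem}(1),(2) show that $\sigma_\alpha$ is an $\IF_p$-linear endomorphism of $G$ with image contained in $\fix(\alpha)$, and that $\Ffrak(\alpha)=\ord(\alpha)\cdot\Exp(\sigma_\alpha(G))$; as $\sigma_\alpha(G)$ is elementary abelian, $\Exp(\sigma_\alpha(G))\in\{1,p\}$, so $\ffrak(\alpha)>\frac12$ holds precisely when either $\sigma_\alpha=0$ and $\ord(\alpha)>\frac12|G|$, or $\sigma_\alpha\neq0$ and $\ord(\alpha)>\frac12 p^{n-1}$. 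Next I would record a criterion for $\sigma_\alpha\neq0$: with $k:=\ord(\alpha)$ one has $\sigma_\alpha=(1+X+\cdots+X^{k-1})(\alpha)$, so $\sigma_\alpha=0$ iff $\mu_\alpha\mid 1+X+\cdots+X^{k-1}$, where $\mu_\alpha$ is the minimal polynomial. Since $\mu_\alpha\mid X^{k}-1=(X-1)(1+X+\cdots+X^{k-1})$, and over $\IF_p$ the identity $X^{k}-1=(X^{k_{p'}}-1)^{k_p}$ with $X^{k_{p'}}-1$ separable shows $X-1$ divides $X^{k}-1$ with exact multiplicity $k_p$, the prime-to-$(X-1)$ part of $\mu_\alpha$ always divides $1+X+\cdots+X^{k-1}$, which itself is divisible by $(X-1)^{k_p-1}$ but not by $(X-1)^{k_p}$. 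Hence $\sigma_\alpha\neq0$ if and only if $1$ is an eigenvalue of $\alpha$ and the largest Jordan block of $\alpha$ for the eigenvalue $1$ has size $j\geq k_p$.

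For the direction (2)$\Rightarrow$(1) I would simply check each bullet. In each case of Proposition~\ref{largeOrdProp} no primary block of $\alpha$ has polynomial $X-1$, so $\sigma_\alpha=0$ and $\ffrak(\alpha)=\ord(\alpha)/|G|>\frac12$. In the two new cases one reads $\ord(\alpha)$ off Proposition~\ref{frobOrdProp} ($p^{n-1}-1$ for the second bullet; $\prod_i(2^{d_i}-1)$ or $2\prod_i(2^{d_i}-1)$ for the third), checks $\sigma_\alpha\neq0$ via the criterion above (for a $\Comp(X-1)$-block this is $p\nmid\ord(\alpha)$, for a $\Comp((X-1)^2)$-block with $p=2$ it is $4\nmid\ord(\alpha)$, both of which hold), and obtains $\ffrak(\alpha)=1-p^{1-n}$, resp.\ $\prod_i(1-2^{-d_i})$, which exceeds $\frac12$.

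The direction (1)$\Rightarrow$(2) is the substantial one. If $\sigma_\alpha=0$ then $\ord(\alpha)>\frac12|G|$ and Proposition~\ref{largeOrdProp} gives the first bullet. Otherwise $\ord(\alpha)>\frac12 p^{n-1}$ and, by the criterion, $1$ is an eigenvalue with largest Jordan block of size $j\geq k_p\geq1$. Let $W$ be the generalized $1$-eigenspace of $\alpha$ and $W'$ the sum of the remaining primary components, $u:=\dim W$; then $\sigma_\alpha$ vanishes on $W'$ and $1\leq j\leq u$. By Proposition~\ref{frobOrdProp}, $\ord(\alpha_{\mid W})=p^{\lceil\log_p j\rceil}$ divides $k$, whence $k_p\geq p^{\lceil\log_p j\rceil}\geq j\geq k_p$; so $j=k_p=p^{\lceil\log_p j\rceil}$ is a power of $p$, and $\ord(\alpha)=\lcm(j,\ord(\alpha_{\mid W'}))\leq j\cdot\ord(\alpha_{\mid W'})$. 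The minimal-polynomial bound $\ord(\alpha_{\mid W'})\leq p^{\dim W'}-1<p^{\,n-u}$ (the commutative $\IF_p$-algebra $\IF_p[\alpha_{\mid W'}]$ has at most $p^{\dim W'}$ elements, hence at most $p^{\dim W'}-1$ units) then forces $j>\frac12 p^{\,u-1}$; together with $j$ a $p$-power and $1\leq j\leq u$ this leaves only $(u,j)=(1,1)$, or $p=2$ and $(u,j)=(2,2)$. In the first case $W=\Comp(X-1)$, $p\nmid\ord(\alpha)$ so $\alpha$ is semisimple, and $\alpha_{\mid W'}$ on the $(n-1)$-dimensional $W'$ has order $>\frac12|W'|$ without eigenvalue $1$; applying Proposition~\ref{largeOrdProp} to $\alpha_{\mid W'}$ (the $\Comp((X-a)^2)$-alternative being excluded by semisimplicity, and $\dim W'=1$ treated directly) yields the second bullet if $p>2$ and the $\Comp(X-1)$-alternative of the third if $p=2$. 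In the second case $W=\Comp((X-1)^2)$, the $2$-part of $\ord(\alpha)$ equals $2$, $\alpha_{\mid W'}$ has order $>\frac12|W'|$ without eigenvalue $1$, and Proposition~\ref{largeOrdProp} gives the $\Comp((X-1)^2)$-alternative of the third bullet (the degenerate cases $\dim W'\in\{0,1\}$ checked directly).

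Finally, for the ``In particular'': from the classification, $\fix(\alpha)$ is $0$-dimensional in the first bullet and $1$-dimensional in the second and third, so $|\fix(\alpha)|\leq p$. In every listed case the primary polynomials of $\alpha$ are pairwise distinct, so Lemma~\ref{centLem} gives $\C_{\Aut(G)}(\alpha)\cong\prod_i(\IF_p[X]/(P_i(X)^{e_i}))^{\ast}$, whose exponent is $\lcm_i(p^{\deg P_i}-1)p^{\lceil\log_p e_i\rceil}$; since each block is a Singer block (the root of $P_i$ then having full multiplicative order $p^{\deg P_i}-1$), or $\Comp((X-a)^2)$ with $\ord(a)=p-1$, or $\Comp((X-1)^{e})$ — for which a short check in each relevant bullet shows the corresponding factor $(p-1)p^{\lceil\log_p e\rceil}$ divides $\ord(\alpha)$ — this $\lcm$ equals $\lcm_i\ord(\Comp(P_i(X)^{e_i}))=\ord(\alpha)$ by Proposition~\ref{frobOrdProp}. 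A proper root $\beta$ of $\alpha$ in $\Aut(G)$ would then centralize $\alpha$ with $\ord(\beta)>\ord(\alpha)=\Exp(\C_{\Aut(G)}(\alpha))$, which is impossible. The part I expect to be the main obstacle is the end of direction (1)$\Rightarrow$(2): extracting ``$u\leq2$'' from $j>\frac12 p^{u-1}$ under the integrality constraints on $j$, and then matching each surviving configuration (including the degenerate low-dimensional ones) to exactly one of the three alternatives; everything else is mechanical once the $\Ffrak$-formula and the shift criterion are in place.
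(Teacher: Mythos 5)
Your proposal is correct, and while it shares the paper's overall skeleton (use Lemma \ref{shiftLem}(1) to write $\Ffrak(\alpha)$ as $\ord(\alpha)$ times a factor in $\{1,p\}$, analyse the $(X-1)$-part of the primary rational canonical form, feed the fixed-point-free complement into Proposition \ref{largeOrdProp}, and prove the \enquote{In particular} via Lemma \ref{centLem}), the execution of the key step is genuinely different and more uniform. The paper handles the $(X-1)$-part piecemeal: it bounds the number of $\Comp(X-1)$-type blocks by an order estimate, kills $\Comp((X-1)^2)$ (for $p>2$) and $\Comp((X-1)^3)$ (for $p=2$) by explicitly computing that their shift functions vanish via $1+X+\cdots+X^{p-1}\equiv 0\Mod{(X-1)^2}$, and separately refutes the leftover configuration $\Comp(X-1)\oplus\Comp((X-a)^2)$ by another shift computation. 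You instead derive one clean criterion -- $\sigma_\alpha=(1+\alpha+\cdots+\alpha^{k-1})$, and $\sigma_\alpha\neq 0$ iff the multiplicity $j$ of $X-1$ in $\mu_\alpha$ satisfies $j\geq k_p$, read off from $X^k-1=(X-1)^{k_p}S(X)^{k_p}$ with $S(1)\neq0$ -- and then a short numerical argument ($j=k_p$ is a $p$-power, $j\leq u=\dim W$, $j>\tfrac12 p^{u-1}$) pins down $(u,j)\in\{(1,1)\}\cup\{(2,2)\text{ with }p=2\}$; in particular the paper's extra $\Comp(X-1)\oplus\Comp((X-a)^2)$ possibility never arises for you, since $j=k_p=1$ forces $p\nmid\ord(\alpha)$ and excludes the second alternative of Proposition \ref{largeOrdProp} on $W'$. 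What your route buys is a single mechanism replacing several ad hoc verifications; what it costs is that you invoke the general exponent formula $\Exp((\IF_p[X]/(P^e))^{\ast})=(p^{\deg P}-1)p^{\lceil\log_p e\rceil}$ (the paper only verifies the two special cases it needs, and handles the $\Comp(X-1)$ factor by noting $p-1$ divides every Singer-cycle order -- your \enquote{short check} is exactly this and does hold in each bullet), and you must watch the degenerate dimensions: when $\dim W'=0$ the strict bound $\ord(\alpha_{\mid W'})<p^{n-u}$ collapses, though $j>\tfrac12 p^{u-1}$ still follows directly, and you correctly flag $\dim W'\in\{0,1\}$ for separate treatment. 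I see no gap.
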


\begin{proof}
Throughout this proof, we write $G$ as an additive group. We start with the proof of the main statement. For the implication \enquote{(2)$\Rightarrow$(1)}, consider the three listed cases separately:
\begin{itemize}
\item If $\ord(\alpha)>\frac{1}{2}|G|$, then by the definition of $\Ffrak(\alpha)$ from Notation \ref{fNot}(1), and by Lemma \ref{shiftLem}(1), we have $\Ffrak(\alpha)\geq\ord(\alpha)>\frac{1}{2}|G|$.
\item If $p>2$ and the blocks of the primary rational canonical form of $\alpha$ are $\Comp(X-1)$ and a Singer cycle in dimension $n-1$, then let $g$ be a nontrivial element of the $1$-eigenspace of $\alpha$. Since $g$ is a fixed point of $\alpha$, we have by Lemma \ref{shiftLem}(1)
\begin{align*}
\Ffrak(\alpha) &\geq\ord(\A_{\alpha,g})=\ord(\alpha)\cdot\ord(\sigma_{\alpha}(g))=\ord(\alpha)\cdot\ord(\ord(\alpha)g) \\
&=(p^{n-1}-1)\cdot\ord((p^{n-1}-1)g)=(p^{n-1}-1)\cdot p=(1-\frac{1}{p^{n-1}})\cdot|G|>\frac{1}{2}|G|.
\end{align*}
\item In the last case, an analogous argument works if the unique non-Singer-cycle block of $\alpha$ is $\Comp(X-1)$, noting that the orders of the Singer cycle blocks $2^{d_i}-1$ are pairwise coprime by assumption. Hence, assume that we have a direct decomposition $G=H_1\oplus H_2$ into $\alpha$-invariant subspaces with $|H_1|=p^2$, $|H_2|=p^{n-2}$ and $\alpha_{\mid H_1}=:\beta$ is similar to $\Comp((X-1)^2)$ whereas $\alpha_{\mid H_2}=:\gamma$ is similar to a block diagonal matrix whose blocks are the Singer cycle blocks of $\alpha$. Let $g\in H_1$ be a non-fixed point of $\alpha$. Then by Lemma \ref{shiftLem}(1)
\begin{align*}
\Ffrak(\alpha) &\geq\ord(\A_{\alpha,g})=\ord(\alpha)\cdot\ord(\sigma_{\alpha}(g))=2\ord(\gamma)\cdot\ord(\sigma_{\alpha}(g)) \\
&=2\ord(\gamma)\cdot\ord(\ord(\gamma)(g^{\beta}+g))=4\ord(\gamma)=\prod_{i=1}^s{(1-\frac{1}{2^{d_i}})}\cdot|G|>\frac{1}{2}|G|.
\end{align*}
\end{itemize}
We now turn to the proof of \enquote{(1)$\Rightarrow$(2)}.  Let $\alpha\in\Aut(G)$ with $\ffrak(\alpha)>\frac{1}{2}$. If $\alpha$ is fixed-point-free, then by Lemma \ref{shiftLem}(2), we have $\frac{1}{2}|G|<\Ffrak(\alpha)=\ord(\alpha)$, and we are done. Assume thus that $\alpha$ has a nontrivial fixed point. Consider the primary rational canonical form of $\alpha$, with blocks $\Comp(P_i(X)^{e_i})$ for $i=1,\ldots,s$, listed with multiplicities. Since $\alpha$ has a nontrivial fixed point, we know that $P_i(X)=X-1$ for at least one $i\in\{1,\ldots,s\}$. We claim that there is exactly one $i\in\{1,\ldots,s\}$ such that $P_i(X)=X-1$. Assume otherwise. Note that the order of a finite group automorphism cannot exceed the group order (see Lemma \ref{fLem}(1) or \cite[Theorem 2]{Hor74a}) and that by Proposition \ref{frobOrdProp}(1), we have $\ord(\Comp((X-1)^e))=p^{\lceil\log_p(e)\rceil}\leq p^{e-1}$ for all $e\in\IN^+$. This implies that
\[
\ord(\alpha)=\lcm_{i=1,\ldots,s}{\ord(\Comp(P_i(X)^{e_i}))}\leq\prod_{i=1}^s{\ord(\Comp(P_i(X)^{e_i}))}\leq\frac{|G|}{p^2},
\]
and thus by Lemma \ref{shiftLem}(1)
\[
\Ffrak(\alpha)\leq p\cdot\ord(\alpha)\leq\frac{1}{p}|G|\leq\frac{1}{2}|G|,
\]
a contradiction. Henceforth, assume without loss of generality that $P_1(X)=X-1$ and $P_i(X)\not=X-1$ for $i=2,\ldots,s$. Note that a similar argument shows that $e_1\leq 3$, and that $e_1\leq 2$ if $p>2$, because $\lceil\log_p(e)\rceil\leq e-2$ for $e\geq4$ in general as well as for $e\geq3$ if $p>2$. We will now show that in fact, $e_1=1$ if $p>2$, and $e_1\leq 2$ if $p=2$.
\begin{itemize}
\item Assume that $p>2$. Then by the above, we have $e_1\in\{1,2\}$. Assume that $e_1=2$. Setting $\beta:=\Comp((X-1)^2)\in\GL_2(p)$ and applying Lemma \ref{fLem}(2), we find that $\ffrak(\beta)>\frac{1}{2}$. However, we will now show that all shifts of $\beta$ are trivial, which is contradictory, since it implies by Lemma \ref{shiftLem}(1) that
\[
\Ffrak(\beta)=\ord(\beta)=p=\frac{1}{p}|\IF_p^2|<\frac{1}{2}|\IF_p^2|.
\]
To see that the shifts of $\beta$ are trivial, note that under identifying $\IF_p^2$ with $\IF_p[X]/((X-1)^2)$, we can write the shift function $\sigma_{\beta}$ in the form
\begin{align*}
\sigma_{\beta}\colon &\IF_p[X]/((X-1)^2)\rightarrow\IF_p[X]/((X-1)^2), \\
&Q(X)+((X-1)^2)\mapsto (1+X+X^2+\cdots+X^{p-1})Q(X)+((X-1)^2).
\end{align*}
An easy induction on $k\in\IN$ shows that $X^k\equiv kX-(k-1)\Mod{(X-1)^2}$, and thus
\begin{align*}
1+X+X^2+\cdots+X^{p-1} &\equiv\sum_{k=0}^{p-1}{\left(kX-(k-1)\right)}=\frac{1}{2}p(p-1)X-\frac{1}{2}p(p-3) \\
&=0\Mod{(X-1)^2},
\end{align*}
so that all values of $\sigma_{\beta}$ are zero, as asserted.
\item Assume that $p=2$. Then $e_1\in\{1,2,3\}$, and we need to refute the possibility that $e_1=3$. And indeed, if $e_1=3$, then setting $\beta:=\Comp((X-1)^3)\in\GL_3(2)$ and applying Lemma \ref{fLem}(2), we find that $\ffrak(\beta)>\frac{1}{2}$. However, $\ord(\beta)=4$ by Proposition \ref{frobOrdProp}, and so by an argument analogous to the one in the previous bullet point, all shifts of $\beta$ are trivial, since $1+X+X^2+X^3=(X-1)^3\equiv0\Mod{(X-1)^3}$. Hence $\Ffrak(\beta)=\ord(\beta)=4=\frac{1}{2}|\IF_2^3|$, a contradiction.
\end{itemize}
Denote by $\gamma$ the block diagonal matrix with blocks $\Comp(P_i(X)^{e_i})$ for $i=2,\ldots,s$. Applying Lemma \ref{fLem}(2), we find that $\ffrak(\gamma)>\frac{1}{2}$, and using that $\gamma$ is fixed-point-free, we have $\ord(\gamma)=\Ffrak(\gamma)$ by Lemma \ref{shiftLem}(2). Using Proposition \ref{largeOrdProp}, we see that $\alpha$ is of one of the listed types except for one more possibility: $p>2$, $n=3$ and the blocks of the primary rational canonical form of $\alpha$ are $\Comp(X-1)$ and $\Comp((X-a)^2)$ where $\IF_p^{\ast}=\langle a\rangle$.

In order to refute this possibility, note that $\ord(\alpha)=\ord(\Comp((X-a)^2))=p(p-1)$ by Proposition \ref{frobOrdProp}, and write $G=H_1\oplus H_2$ with $H_1$ the ($1$-dimensional) $1$-eigenspace of $\alpha$ and $H_2$ a $2$-dimensional $\alpha$-invariant subspace of $G$ on which $\alpha$ acts as $\Comp((X-a)^2)$. Set $\beta:=\alpha_{\mid H_2}$. We can write an arbitrary element $g\in G$ as $g=g_1+g_2$ with $g_i\in H_i$ for $i=1,2$, and in view of Lemma \ref{shiftLem}(2), we have
\[
\sigma_{\alpha}(g)=\sigma_{\alpha}(g_1)+\sigma_{\alpha}(g_2)=\ord(\alpha)g_1+\sigma_{\beta}(g_2)=0+0=0,
\]
whence all shifts of $\alpha$ are trivial and
\[
\Ffrak(\alpha)=\ord(\alpha)=(1-\frac{1}{p})\frac{1}{p}|G|<\frac{1}{2}|G|,
\]
a contradiction, concluding the proof of the implication \enquote{(1)$\Rightarrow$(2)}.

It remains to prove the \enquote{In particular}. Let $\alpha$ be of one of the three types listed in statement (2). The subgroup $\fix(\alpha)\leq G$ consisting of the fixed points of $\alpha$ is isomorphic to the direct product $\prod_{i=1}^s{\fix(\Comp(P_i(X)^{e_i}))}$. Since
\[
|\fix(\Comp(X-1))|=|\fix(\Comp((X-1)^2))|=p
\]
and all other possible primary Frobenius blocks of $\alpha$ are fixed-point-free, the assertion on the number of fixed points of $\alpha$ follows. As for the assertion on the exponent of $\C_{\Aut(G)}(\alpha)$, proceed as follows: Using Proposition \ref{largeOrdProp} to cover the case \enquote{$\ord(\alpha)>\frac{1}{2}|G|$}, we see that the primary Frobenius blocks of $\alpha$, $\Comp(P_i(X)^{e_i})$ for $i=1,\ldots,s$, have pairwise distinct underlying polynomials $P_i(X)$. By Lemma \ref{centLem}, it follows that
\begin{equation}\label{isomorphismEq}
\C_{\Aut(G)}(\alpha)\cong\prod_{i=1}^s{(\IF_p[X]/(P_i(X)^{e_i}))^{\ast}}.
\end{equation}
We now claim that unless $P_i(X)^{e_i}=X-1$, we have
\[
\Exp((\IF_p[X]/(P_i(X)^{e_i}))^{\ast})=\ord(\Comp(P_i(X)^{e_i})).
\]
Indeed, other than $X-1$, there are only the following two possibilities for $P_i(X)^{e_i}$:
\begin{itemize}
\item $e_i=1$ and $P_i(X)$ is primitive monic irreducible, i.e., $\Comp(P_i(X)^{e_i})$ is a Singer cycle. Then the quotient ring $\IF_p[X]/(P_i(X)^{e_i})$ is isomorphic to the finite field $\IF_{p^{\deg{P_i(X)}}}$, and so
\[
\Exp((\IF_p[X]/(P_i(X)^{e_i}))^{\ast})=p^{\deg{P_i(X)}}-1=\ord(\Comp(P_i(X)^{e_i})).
\]
\item $e_i=2$ and $P_i(X)=X-a$ where $\IF_p^{\ast}=\langle a\rangle$. Let $Q(X)\in\IF_p[X]$ such that $X-a\nmid Q(X)$. Then since $\IF_p[X]/((X-a))$ is isomorphic to $\IF_p$, we have $Q(X)^{p-1}\equiv 1\Mod{X-a}$, i.e.,
\[
Q(X)^{p-1}=1+R(X)(X-a)
\]
for some $R(X)\in\IF_p[X]$. Consequently,
\[
Q(X)^{p(p-1)}=(1+R(X)(X-a))^p=1+R(X)^p(X-a)^p\equiv 1\Mod{(X-a)^2},
\]
and thus
\[
\Exp((\IF_p[X]/((X-a)^2))^{\ast})=p(p-1)=\ord(\Comp((X-a)^2)).
\]
\end{itemize}
In view of Formula (\ref{isomorphismEq}), this implies that $\Exp(\C_{\Aut(G)}(\alpha))=\ord(\alpha)$ except possibly if $\alpha$ has a primary Frobenius block of the form $\Comp(X-1)$. However, in that case, $\alpha$ also has a primary Frobenius block that is a Singer cycle. Since the order of every Singer cycle is divisible by $p-1=\Exp(\IF_p^{\ast})=\Exp((\IF_p[X]/(X-1))^{\ast})$, the equality $\Exp(\C_{\Aut(G)}(\alpha))=\ord(\alpha)$ holds in that case too.

Finally, now that we know that $\Exp(\C_{\Aut(G)}(\alpha))=\ord(\alpha)$, note that if $\alpha$ had a proper root $\beta$ in $\Aut(G)$, then $\beta$ would centralize $\alpha$ and be of strictly larger order than $\alpha$, a contradiction.
\end{proof}

The following is a consequence of Proposition \ref{elAbFProp} and provides an important structural restriction on finite metabelian groups with large $\ffrak$-value:

\begin{proposition}\label{fCyclicProp}
Let $G$ be a finite metabelian group with $\ffrak(G)>\frac{1}{2}$. Then $G'$ is cyclic.
\end{proposition}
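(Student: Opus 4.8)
The plan is to reduce to the case in which $G'$ is an elementary abelian $p$-group and then to show that its rank is at most $1$; the key input will be Proposition \ref{elAbFProp}, applied to the action of a witnessing automorphism on $G'$ and played against the identity $G'=[G,G]$. For the reduction, recall that $G'$ is abelian, hence the internal direct product of its Sylow subgroups $(G')_p$, and that $(G')_p$ and $\Phi((G')_p)$ are characteristic in $G$. To prove that $G'$ is cyclic it suffices, for each prime $p$, to prove that $(G')_p$ is cyclic, equivalently (Burnside basis theorem) that $(G')_p/\Phi((G')_p)$ is cyclic. Fix $p$, put $N:=\Phi((G')_p)$, and let $M/N$ be the Hall $p'$-subgroup of $(G/N)'=G'/N$; then $N$ and $M$ are characteristic in $G$ and contained in $G'$, so $G/M$ is metabelian with $(G/M)'=G'/M\cong(G')_p/N$ elementary abelian, and $\ffrak(G/M)\ge\ffrak(G)>\frac12$ by Lemma \ref{fLem}(3). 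Hence it is enough to prove: if $H$ is a finite metabelian group with $\ffrak(H)>\frac12$ whose derived subgroup $V:=H'$ is an elementary abelian $p$-group, then $\dim_{\IF_p}V\le1$.

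Assume, for a contradiction, that $r:=\dim_{\IF_p}V\ge2$, and fix $\alpha\in\Aut(H)$ with $\ffrak(\alpha)>\frac12$. Let $\tilde\alpha$ be the automorphism of the abelian group $H/V$ induced by $\alpha$, set $m:=\Ffrak(\tilde\alpha)$, and let $\beta:=\alpha_{\mid V}\in\GL(V)$. Dividing the inequality in Lemma \ref{fLem}(2) by $|H|=|H/V|\cdot|V|$ gives $\ffrak(\alpha)\le\ffrak(\tilde\alpha)\cdot\ffrak(\beta^m)\le\ffrak(\beta^m)$, so $\ffrak(\beta^m)>\frac12$. Since $r\ge2$, Proposition \ref{elAbFProp} (with its ``In particular'' clause) now pins down the action of $\beta^m$ on $V$, and thereby strongly restricts that of $\beta$: up to a single ``unipotent'' summand of dimension at most $2$, the primary rational canonical form of $\beta^m$ consists of Singer cycles in pairwise coprime dimensions larger than $1$; moreover $\beta^m$ has at most a $1$-dimensional fixed space in $V$, the exponent of $\C_{\GL(V)}(\beta^m)$ equals $\ord(\beta^m)$, and $\beta^m$ has no proper root in $\GL(V)$. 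In particular, the primary polynomials of $\beta^m$ are pairwise distinct, so every $\beta$-invariant subspace of $V$ respects the corresponding direct decomposition of $V$ (cf.\ Lemma \ref{centLem}).

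It remains to contradict $r\ge2$ by using that $V=[H,H]$. Since $H$ is metabelian and $V$ is abelian, $H$ acts on $V$ by conjugation through an abelian group $\bar H:=H/\C_H(V)$, which is a quotient of $H/V$; by Lemma \ref{abKerLem}, $\beta$ normalizes $\bar H$ inside $\GL(V)$. Furthermore, $V=[H,H]$ forces $V$ to be generated as an $\IF_p[\bar H]$-module by the commutators $[x,y]$ with $x,y\in H$, while $[V,H]$ is characteristic in $H$, hence $\beta$-invariant. Confronting the rigid block structure of $\beta$ on $V$ with these module-theoretic facts should yield the contradiction: I expect the argument to split according to whether $\bar H\ne1$ — in which case the abelian subgroup $\bar H\le\GL(V)$, being normalized by $\beta$, must be compatible with the primary decomposition of $\beta^m$, and generation of $V$ by commutators then forces $r\le1$ — or $\bar H=1$, i.e.\ $V\le\zeta H$ and $H$ is nilpotent of class at most $2$; in this last subcase the conjugation action carries no information, and one must instead bound $\Ffrak(\delta)$ directly for every $\delta\in\Aut(H)$, combining Lemma \ref{shiftLem}(1) with bounds on $\ord(\delta)$ and on the orders of $\delta$-shifts coming from $\Exp(H)\mid p\cdot\Exp(H/V)$. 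This nilpotent-class-$2$ subcase, together with the bookkeeping needed to make the block-structure-versus-module comparison precise, is the point I expect to be the main obstacle.
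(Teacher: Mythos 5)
Your reduction to a metabelian $H$ with $V:=H'$ elementary abelian of rank $r\geq 2$, and the deduction $\ffrak(\beta^m)>\frac{1}{2}$ from Lemma \ref{fLem}(2), match the paper's opening moves. But beyond that point the proposal is a programme rather than a proof, and you say so yourself: the contradiction is only ``expected'', and the nilpotent-class-$2$ subcase is explicitly flagged as an unresolved obstacle. Two concrete gaps. First, the claim that the structure of $\beta^m$ ``thereby strongly restricts'' $\beta$ is not justified as written. What is actually needed is that $m=\Ffrak(\tilde\alpha)$ is coprime to $\ord(\beta)$; the paper gets this from the ``no proper root'' clause of Proposition \ref{elAbFProp}: if $\gcd(m,\ord(\beta))>1$, then $\beta$ would be a proper root of $\beta^m$ in $\Aut(V)$, which is impossible. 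Coprimality gives $\langle\beta^m\rangle=\langle\beta\rangle$, and this is the engine of the whole proof; without it you cannot transfer the block structure, the fixed-point bound, or the centralizer-exponent statement from $\beta^m$ back to $\beta$, nor run the divisibility arguments below.

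Second, neither branch of your case split is carried out, and the branch you call the main obstacle is exactly where the real work lies. In the paper the non-central case is not analyzed against the block structure but eliminated outright: $\alpha^m$ induces the identity on $H/V$, so by Lemma \ref{abKerLem} $\beta^m$ (hence $\beta$) centralizes the image of the conjugation action $H\rightarrow\Aut(V)$; if $K$ is its kernel, the automorphism induced by $\alpha$ on $H/K$ is the identity yet has $\ffrak$-value $>\frac{1}{2}$, forcing $H/K$ cyclic with $|H/K|\mid m$, while $H/K$ embeds into $\C_{\Aut(V)}(\beta)$, whose exponent is $\ord(\beta)$ by the ``In particular'' of Proposition \ref{elAbFProp}; coprimality then forces $K=H$, i.e.\ $V\leq\zeta H$. (Your sketch for $\bar H\neq 1$ --- ``compatibility with the primary decomposition plus generation by commutators forces $r\leq1$'' --- is an assertion, not an argument.) In the central case the paper reduces to a $p$-group, passes to $H/\Phi(H)$, and uses the induced alternating bilinear commutator map $(H/\Phi(H))^2\rightarrow V$ whose image generates $V$: since $\alpha_{\mid V}$ has at most $p$ fixed points and $|V|\geq p^2$, some value $c=[x\Phi(H),y\Phi(H)]$ is moved by $\alpha$, and comparing the cycle length of $c$ with those of $x\Phi(H),y\Phi(H)$ under the induced automorphism $\alpha_0$ yields $1<\lcm(\ell_x,\ell_y)\mid\gcd(\ord(\alpha_{\mid V}),\Ffrak(\alpha_0))=1$, again by coprimality. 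Your fallback for this case --- bounding $\Ffrak(\delta)$ directly via $\Ffrak(\delta)\leq\ord(\delta)\cdot\Exp(H)$ --- has no evident route to a contradiction, since it is useless without an upper bound on $\ord(\delta)$ of roughly $|H|/(2\Exp(H))$ for automorphisms of class-$2$ $p$-groups, which you neither prove nor make plausible. So the statement remains unproved as proposed.
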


\begin{proof}
Assume otherwise. Let $p$ be a prime divisor of $|G'|$ such that the Sylow $p$-subgroup of $G'$ is not cyclic. Through replacing $G$ by $G/\{g^p\mid g\in G'\}$ and using Lemma \ref{fLem}(3), we may assume without loss of generality that $G'$ is an elementary abelian $p$-group with $|G'|\geq p^2$. Let $\alpha\in\Aut(G)$ with $\ffrak(\alpha)>\frac{1}{2}$, and denote by $\tilde{\alpha}$ the automorphism of $G/G'$ induced by $\alpha$. In view of Lemma \ref{fLem}(2), applied with $N:=G'$, and the \enquote{In particular} in Proposition \ref{elAbFProp}, we have that $\Ffrak(\tilde{\alpha})$ is coprime to $\ord(\alpha_{\mid G'})$ -- otherwise, $(\alpha_{\mid G'})^{\Ffrak(\tilde{\alpha})}$ has $\ffrak$-value larger than $\frac{1}{2}$ and admits a proper root in $\Aut(G')$, which is impossible. Set $\beta:=\alpha^{\Ffrak(\tilde{\alpha})}$. Then $\beta$ induces the identity on $G/G'$, but $\beta_{\mid G'}$ generates the same cyclic subgroup of $\Aut(G')$ as $\alpha_{\mid G'}$. Using Lemma \ref{abKerLem}, it follows that $\beta_{\mid G'}$, and thus $\alpha_{\mid G'}$, centralizes the entire image of the conjugation action $G\rightarrow\Aut(G')$ (because elements of $G$ in the same coset of $G'$ correspond to the same conjugation automorphism on $G'$). Let $K$ be the kernel of that conjugation action. Then $K$ is $\alpha$-invariant by Lemma \ref{abKerLem}, and denoting by $\alpha'$ the automorphism of $G/K$ induced by $\alpha$, Lemma \ref{abKerLem} implies that $\alpha'=\id_{G/K}$. However, by Lemma \ref{fLem}(2), we must have $\ffrak(\alpha')>\frac{1}{2}$, whence $G/K$ is cyclic, and the divisibility in Lemma \ref{fLem}(2) implies that $\Ffrak(\alpha')=|G/K|\mid\Ffrak(\tilde{\alpha})$. But $G/K$ is isomorphic to a subgroup of $\C_{\Aut(G')}(\alpha_{\mid G'})=\C_{\Aut(G')}(\beta_{\mid G'})$, which (since $\ffrak(\beta_{\mid G'})>\frac{1}{2}$ by Lemma \ref{fLem}(2)) has exponent $\ord(\beta_{\mid G'})=\ord(\alpha_{\mid G'})$ by the \enquote{In particular} of Proposition \ref{elAbFProp}. Hence if $\ell$ is a prime divisor of $|G/K|$, it follows that
\[
\ell\mid\gcd(\ord(\alpha_{\mid G'}),\Ffrak(\tilde{\alpha}))=1,
\]
a contradiction. Therefore, $K=G$, or equivalently, $G'$ is central in $G$. Hence $G$ is nilpotent of class $2$, and since $G'$ is a $p$-group, all Sylow subgroups of $G$ except the Sylow $p$-subgroup are abelian. Through quotienting out the Hall $p'$-subgroup of $G$ and applying Lemma \ref{fLem}(3), we may assume that $G$ is a $p$-group. Consider the Frattini quotient $G/\Phi(G)$, which is a non-cyclic elementary abelian $p$-group by the Burnside Basis Theorem \cite[result 5.3.2, p.~140]{Rob96a}. Denote by $\alpha_0$ the automorphism of $G/\Phi(G)$ induced by $\alpha$. Then by Lemma \ref{fLem}(2), we have $\ffrak(\alpha_0)>\frac{1}{2}$ and $\Ffrak(\alpha_0)\mid\Ffrak(\tilde{\alpha})$, so that $\gcd(\Ffrak(\alpha_0),\ord(\alpha_{\mid G'}))=1$. Note that $G'\leq\Phi(G)\leq\zeta G$, since $G/\Phi(G)$ is abelian and $G/\zeta G$ is an elementary abelian $p$-group; indeed, for all $x,y\in G$, using \cite[Exercise 5.1.4, p.~128]{Rob96a} (a consequence of the first commutator identity in \cite[result 5.1.5(ii), p.~123]{Rob96a}) and that $G'$ is of exponent $p$ we have
\[
[x^p,y]=[x,y]^p=1_G.
\]
Therefore, the commutator map $[,]:G^2\rightarrow G'$ induces an alternating, skew-symmetric $\IF_p$-bilinear form $[,]:(G/\Phi(G))^2\rightarrow G'$, whose image generates $G'$. Observe that $\ffrak(\alpha_{\mid G'})>\frac{1}{2}$ -- this holds because $\ffrak((\alpha_{\mid G'})^{\Ffrak(\tilde{\alpha})})>\frac{1}{2}$ by Lemma \ref{fLem}(2), and because, as explained above, $\Ffrak(\tilde{\alpha})$ is coprime to $\ord(\alpha_{\mid G'})$, whence one of the conditions from Proposition \ref{elAbFProp} applies to $\alpha_{\mid G'}$ just as it does to $\alpha_{\mid G'}^{\Ffrak(\tilde{\alpha})}$. Now, since $|G'|>p$ and the fixed point subgroup of $\alpha_{\mid G'}$ has at most $p$ elements by the \enquote{In particular} of Proposition \ref{elAbFProp}, we can choose an element $c$ in the image of the said bilinear form that is not fixed under $\alpha$; we denote the cycle length of $c$ under $\alpha$ by $\ell_c$. Let $x\Phi(G),y\Phi(G)\in G/\Phi(G)$ be such that $[x\Phi(G),y\Phi(G)]=c$. Since $c\not=0_G$, we find that $x\Phi(G)$ and $y\Phi(G)$ must be $\IF_p$-linearly independent elements of $G/\Phi(G)$. Since the fixed point subgroup of $\alpha_0$ in $G/\Phi(G)$ has order at most $p$, it follows that at least one of $x\Phi(G)$ or $y\Phi(G)$ is not fixed under $\alpha_0$, and so if $\ell_z$ for $z\in\{x,y\}$ denotes the cycle length of $z\Phi(G)$ under $\alpha_0$, we find that $\lcm(\ell_x,\ell_y)>1$. Since
\[
c^{\left(\alpha^n\right)}=[x\Phi(G),y\Phi(G)]^{\left(\alpha^n\right)}=[(x\Phi(G))^{\left(\alpha_0^n\right)},(y\Phi(G))^{\left(\alpha_0^n\right)}]
\]
for all $n\in\IZ$, it follows that
\[
1<\lcm(\ell_x,\ell_y)\mid\gcd(\ell_c,\ord(\alpha_0))\mid\gcd(\ord(\alpha_{\mid G'}),\Ffrak(\alpha_0))=1,
\]
a contradiction.
\end{proof}

The second statement in the following proposition provides a structural restriction on triples of consecutive (nontrivial) factors in derived series of finite groups: They cannot all be cyclic. This restriction will be crucial for refuting the possibility that a finite solvable group $G$ of derived length at least $4$ can satisfy $\ffrak(G)>\frac{1}{2}$.

\begin{proposition}\label{integralProp}
Let $G$ be a finite group.
\begin{enumerate}
\item Assume that $G$ is metabelian and that both $G'$ and $G/G'$ are cyclic. Then $G'$ admits a semidirect complement $\langle g\rangle$ in $G$, and the conjugation by $g$ on $G'$ is fixed-point-free.
\item Assume that $G$ is solvable of derived length $3$. Then at least one of the three factors in the derived series of $G$ (i.e., $G''$, $G'/G''$ or $G/G'$) is \emph{not} cyclic.
\end{enumerate}
\end{proposition}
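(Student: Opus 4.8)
The plan is to establish (1) directly and then deduce (2) from it (indeed from the core idea of its proof). For (1), set $m:=|G:G'|$ and pick $g\in G$ whose image in $G/G'$ generates that cyclic group, so $G=\langle g\rangle G'$ and $g^m\in G'$; let $\gamma\in\Aut(G')$ denote conjugation by $g$ on the abelian group $G'$. The one substantive step is the identity
\[
G'=[G',\langle g\rangle]=\{c^{-1}c^{g}\mid c\in G'\}=\im(\gamma-\id).
\]
Here the right-hand set is a subgroup, since $c\mapsto c^{-1}c^{g}$ is an endomorphism of the abelian group $G'$, and it is $\gamma$-invariant, hence normal in $G=\langle G',g\rangle$; write it as $M$. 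By construction $\gamma$ acts trivially on $G'/M$, so $G'/M$ is central in $G/M$; since $(G/M)/(G'/M)\cong G/G'$ is cyclic, $G/M$ is abelian, and then $G'/M=(G/M)'=\{1\}$, i.e.\ $G'=M$. It follows that $\gamma-\id$ is a surjective, hence bijective, endomorphism of the finite group $G'$, so its kernel is trivial: conjugation by $g$ on $G'$ is fixed-point-free. Finally, $g^m\in G'$ is a fixed point of $\gamma$ (it commutes with $g$), so $g^m=1$; combined with the fact that $g^j\notin G'$ for $0<j<m$ (as $gG'$ has order $m$ in $G/G'$), this yields $|\langle g\rangle|=m$ and $\langle g\rangle\cap G'=\{1\}$, whence $G=G'\rtimes\langle g\rangle$.

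For (2), suppose $G$ has derived length exactly $3$, so $G''\neq\{1\}$ and $G'''=\{1\}$, and assume toward a contradiction that $G''$, $G'/G''$ and $G/G'$ are all cyclic. Since $G''$ is cyclic, $\Aut(G'')$ is abelian; as $G''$ is characteristic in $G$, the conjugation map $G\to\Aut(G'')$ has abelian image, so its kernel contains $G'$. Thus every element of $G'$ centralizes $G''$, i.e.\ $G''\leq\zeta G'$. But then $G'$ has the central subgroup $G''$ with cyclic quotient $G'/G''$, forcing $G'$ to be abelian, so that $G''=(G')'=\{1\}$ --- contradicting derived length $3$. (Alternatively, apply (1) to the metabelian group $G'$, whose derived subgroup $G''$ and abelianization $G'/G''$ are cyclic: it produces $g_0\in G'$ acting fixed-point-freely by conjugation on $G''$; but $g_0$ centralizes $G''$, so this automorphism is the identity, again forcing $G''=\{1\}$.)

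I expect the only real obstacle to be the identity $[G',\langle g\rangle]=G'$ in part (1); granting it, the rest is short bookkeeping. Its proof rests on the elementary fact that a group possessing a central subgroup with cyclic quotient is abelian, and this is exactly where the hypothesis that $G/G'$ is cyclic is used --- notably, cyclicity of $G'$ itself is not needed for (1), only that $G'$ is abelian. In part (2) the single point to keep track of is that ``derived length $3$'' guarantees $G''\neq\{1\}$, which is what makes the two-line argument land as a contradiction rather than a triviality.
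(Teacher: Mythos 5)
Your proposal is correct, and for part (2) it takes a genuinely different (and shorter) route than the paper. For (1), the paper uses the cyclicity of $G'$ essentially: fixing $G'=\langle h\rangle$ and $g$ mapping to a generator of $G/G'$, it notes that $G'$ is the normal closure of $[g,h]$, hence $G'=\langle [g,h]\rangle=\langle h^{\lambda-1}\rangle$ where $h^g=h^{\lambda}$, and reads off $\gcd(\lambda-1,|G'|)=1$, i.e.\ fixed-point-freeness; your argument instead shows directly that the subgroup $M=\im(\gamma-\id)\leq G'$ is normal with $G/M$ abelian, so $M=G'$ and $\gamma-\id$ is bijective. The two are close in spirit, but yours never uses that $G'$ is cyclic, so it proves the statement for any metabelian $G$ with cyclic abelianization --- a mild strengthening that costs nothing. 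For (2), the paper argues by contradiction using part (1) twice (on $G'$ and on $G/G''$), chooses a Sylow $p$-subgroup of the complement acting nontrivially on $G''$, and derives a contradiction from conjugation by $h$ and by $h^k$ being distinct yet $\Aut(G'')$-conjugate automorphisms (via Lemma \ref{abKerLem}) inside the abelian group $\Aut(G'')$. Your route is more direct: cyclicity of the normal subgroup $G''$ alone gives $G'\leq\C_G(G'')$ (abelian image in $\Aut(G'')$), so $G''\leq\zeta(G')$, and then cyclicity of $G'/G''$ forces $G'$ abelian, i.e.\ $G''=\{1\}$, contradicting derived length $3$. This is both shorter and formally stronger --- it shows that already one of $G''$ or $G'/G''$ must be non-cyclic, without invoking part (1) or the cyclicity of $G/G'$ at all. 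Both proofs ultimately exploit the abelianness of the automorphism group of a cyclic group; you deploy it one step earlier, which is exactly what buys the simplification.
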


\begin{proof}
For statement (1): Let $G'=\langle h\rangle$, and fix an element $g\in G$ mapping onto a generator of $G/G'$ under the canonical projection $G\rightarrow G/G'$. Then $G=\langle g,h\rangle$, whence by \cite[result 5.1.7, p.~124]{Rob96a}, $G'$ is the smallest normal subgroup of $G$ containing $[g,h]$. Using the cyclicity of $G'$, this implies that $G'=\langle [g,h]\rangle$. Write $h^g=h^{\lambda}$ with $\lambda\in\IZ$ and $\gcd(\lambda,|G'|)=1$. Then $[g,h]=h^gh^{-1}=h^{\lambda-1}$, so that $\gcd(\lambda-1,|G'|)=1$ and the conjugation by $g$ on $G'$ is fixed-point-free. In particular, noting that $g$ centralizes each power of itself, we must have $G'\cap\langle g\rangle=\{1_G\}$, and thus $G=\langle g\rangle\ltimes G'$, as required.

For statement (2): Assume otherwise. By statement (1), we have $G'=\langle g\rangle\ltimes G''$ for some $g\in G'$. Since the conjugation by $g$ on $G''$ is fixed-point-free, the induced conjugation action $G'/G''\rightarrow\Aut(G'')$ is nontrivial, so there is a prime divisor $p$ of the index $|G':G''|$ such that the Sylow $p$-subgroup of $G'/G''$ is not contained in the kernel of this action. Let $h$ be a generator of the Sylow $p$-subgroup of $\langle g\rangle$; then $h$ acts nontrivially on $G''$ by conjugation. Set $Q:=G/G''$, and denote by $\pi$ the canonical projection $G\rightarrow Q$. Then, again by statement (1), we have $Q=\langle k^{\pi}\rangle\ltimes\langle g^{\pi}\rangle$ for some $k\in G$, and the conjugation by $k^{\pi}$ on $\langle g^{\pi}\rangle$ is fixed-point-free. This means that $(g^{\pi})^{k^{\pi}}=(g^{\pi})^{\lambda}$ for some $\lambda\in\IZ$ with $\lambda\not\equiv0,1\Mod{\ell}$ for every prime divisor $\ell$ of $\ord(g^{\pi})=\ord(g)$. In particular, $\lambda\not\equiv0,1\Mod{p}$. It follows that $h^k=xh^{\lambda}$ for some $x\in G''$. Since $\langle h\rangle$ is not fully contained in the kernel of the conjugation action $G'\rightarrow\Aut(G'')$, it follows that the conjugations by $h$ and $h^k$ on $G''$ are distinct automorphisms of $G''$, while also being $\Aut(G'')$-conjugate to each other by Lemma \ref{abKerLem}. However, this is impossible, since $\Aut(G'')$ is abelian.
\end{proof}

We are now ready to prove Proposition \ref{mainProp}.

\begin{proof}[Proof of Proposition \ref{mainProp}]
Assume, aiming for a contradiction, that $G$ is a finite solvable group of derived length $\ell\geq4$ such that $\ffrak(G)>\frac{1}{2}$. By Lemma \ref{fLem}(3), we have $\ffrak(G/G^{(4)})>\frac{1}{2}$, so that we may assume that the derived length of $G$ is exactly $4$. Lemma \ref{fLem}(3) lets us infer that $\ffrak(G''),\ffrak(G'/G'''),\ffrak(G/G'')>\frac{1}{2}$, and thus Proposition \ref{fCyclicProp} implies that each of the groups $G'''$, $G''/G'''$ and $G'/G''$ is cyclic. However, these groups are the factors in the derived series of $G'$, yielding a contradiction to Proposition \ref{integralProp}(2).
\end{proof}

Now that the validity of Proposition \ref{mainProp} has been established, we can prove Theorem \ref{mainTheo}(2).

\begin{proof}[Proof of Theorem \ref{mainTheo}(2)]
Let $G$ be a finite group admitting an affine map of order at least $\rho|G|$. Then in particular, $\ffrak(\Rad(G))\geq\ffrak(G)\geq\rho$. Denote by $\ell$ the derived length of $\Rad(G)$, and consider the characteristic series $\Rad(G)=R_0>R_1>\cdots>R_{\lfloor\ell/3\rfloor}\geq\{1_G\}$ in $\Rad(G)$ with $R_i:=\Rad(G)^{(4i)}$ for $i=0,1,\ldots,\lfloor\frac{\ell}{4}\rfloor$. This series has $\lfloor\frac{\ell}{4}\rfloor$ factors that are solvable of derived length $4$, and Lemma \ref{fLem}(3) as well as Proposition \ref{mainProp} let us conclude that
\[
\rho\leq\ffrak(\Rad(G))\leq\prod_{i=0}^{\lfloor\ell/4\rfloor-1}{\ffrak(R_i/R_{i+1})}\leq\prod_{i=0}^{\lfloor\ell/4\rfloor-1}{\frac{1}{2}}=\left(\frac{1}{2}\right)^{\lfloor\ell/4\rfloor},
\]
whence $\log_2(\rho^{-1})\geq\lfloor\frac{\ell}{4}\rfloor$, which is equivalent to $\ell\leq4\cdot\lfloor\log_2(\rho^{-1})\rfloor+3$, the asserted inequality.
\end{proof}

\section{Proof of Theorem \ref{mainTheo}(1)}\label{sec4}

In view of Proposition \ref{mainProp}, Theorem \ref{mainTheo}(1) follows easily from the following:

\begin{proposition}\label{mainProp2}
Let $G$ be a finite group with $\ffrak(G)>\frac{1}{2}$. Then $G$ is solvable.
\end{proposition}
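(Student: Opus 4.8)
The plan is to argue by contradiction: suppose $G$ is a finite group with $\ffrak(G) > \frac{1}{2}$ that is not solvable, and choose $G$ of minimal order among all such counterexamples. The first step is to use Lemma \ref{fLem}(3) to reduce to a situation where $G$ has no proper nontrivial characteristic subgroup $N$ with $G/N$ nonsolvable: if such an $N$ existed, then $\ffrak(G/N) \geq \ffrak(G) > \frac{1}{2}$ by Lemma \ref{fLem}(3), and $G/N$ would be a smaller counterexample. Combined with the fact (from \cite[Theorem 1.1.3(2)]{Bor17a}, or directly) that $G$ is \enquote{almost solvable} and the minimality of $G$, I would aim to pin down the structure of $G$ fairly tightly; the natural candidate is that $\Rad(G)$ is \enquote{small} relative to $G$ and the nonsolvable part is controlled by a characteristically simple section. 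In fact the cleanest route is to reduce to the case where $G$ itself is characteristically simple and nonabelian, i.e.\ $G \cong S^k$ for a finite nonabelian simple group $S$: passing to $G/\Rad(G)$ and then to a minimal characteristic subgroup of the result, Lemma \ref{fLem}(3) keeps the $\ffrak$-value above $\frac{1}{2}$ at each stage, and a nonsolvable group must have such a nonabelian characteristically simple section.

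The heart of the argument is then to show that if $S$ is a finite nonabelian simple group and $G \cong S^k$, then $\ffrak(G) \leq \frac{1}{2}$, which contradicts the above. For this I would bound $\Ffrak(\alpha)$ for an arbitrary $\alpha \in \Aut(S^k) \cong \Aut(S) \wr \Sym(k)$. Write $\alpha$ according to its action on the $k$ simple direct factors: $\alpha$ permutes the factors via some $\pi \in \Sym(k)$, and on each $\pi$-orbit of factors it acts (after identifying the factors) as a single automorphism $\beta$ of $S$ twisted by the cyclic permutation. A power computation in the holomorph — analogous to the shift computation underlying Lemma \ref{shiftLem}(1) — reduces the order of $\A_{\alpha,x}$ on an orbit of length $m$ to $m$ times the order of $\A_{\beta', y}$ for an induced automorphism $\beta'$ of a single copy of $S$ and a suitable $y \in S$ built from the components of $x$. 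Since $S$ is centerless, Lemma \ref{shiftLem}(3) gives $\Ffrak(\conj_S(r)) = \Exp(S)$ for inner $\beta'$, and more generally $\ord(\A_{\beta',y})$ is at most (a small multiple of) $\Exp(S\rtimes\langle\beta'\rangle)$; the upshot is a bound of the shape $\Ffrak(\alpha) \leq (\text{product over orbits of } m_i) \cdot \Exp(\Aut(S))^{?}$, i.e.\ a polynomial-in-$k$ times a fixed constant depending only on $S$, whereas $|G| = |S|^k$ grows exponentially in $k$. For $k$ large this immediately yields $\Ffrak(\alpha) < \frac{1}{2}|S|^k$. To finish, one needs the finitely many small-$k$ cases (including $k=1$), where I would invoke the classification of finite simple groups: for each $S$ one checks that the maximal element order in $\Aut(S)$ (or in the relevant wreath product on a bounded number of factors), which is the quantity controlling $\Ffrak$, is far below $\frac{1}{2}|S|$ — this is a well-documented phenomenon, since element orders in (almost) simple groups are at most roughly $|S|^{o(1)}$ or a small power, certainly $o(|S|)$.

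The main obstacle I expect is the bookkeeping for the wreath-product automorphisms together with the invocation of CFSG-based bounds on element orders in automorphism groups of simple groups: one must be careful that $\alpha$ need not be inner, so Lemma \ref{shiftLem}(3) does not apply verbatim, and the contribution of diagonal and field automorphisms to the order has to be absorbed into the constant. I would handle this by first treating the case $k=1$ (bounding $\ord(\alpha)$ and the shift contribution for $\alpha \in \Aut(S)$ using known tables/estimates for $\meo(\Aut(S))$), then bootstrapping to general $k$ via the orbit decomposition and Lemma \ref{fLem}(2) applied to the characteristic subgroup generated by one orbit of factors. A secondary subtlety is making the reduction to the characteristically-simple case airtight: one should verify that at no point does quotienting by a characteristic subgroup destroy nonsolvability while still being forced by the $\ffrak$-hypothesis, which is exactly what Lemma \ref{fLem}(3) guarantees.
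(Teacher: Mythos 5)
Your global strategy coincides with the paper's: reduce via Lemma \ref{fLem}(3) to a nonabelian characteristically simple section $S^n$ (the paper takes a characteristic subgroup $S^n$ of the socle of $G/\Rad(G)$), dispose of large $n$ by a crude bound (the paper cites $\Ffrak(S^n)\leq|S^n|^{0.938}$ from \cite[proof of Theorem 5.2.5]{Bor17a} for $n\geq 3$ and for $n=2$, $|S|\geq 300$), and settle the remaining cases with CFSG input. The genuine gap is in your treatment of the critical small cases, above all $n=1$. You assert that the quantity controlling $\Ffrak(S)$ is the maximal element order of $\Aut(S)$ and that it is \enquote{far below} $\frac{1}{2}|S|$. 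Neither claim is correct: $\Ffrak(\alpha)$ is an lcm over the whole coset of affine maps, i.e.\ an exponent-type quantity, not a maximal element order. Already for inner $\alpha$, Lemma \ref{shiftLem}(3) gives $\Ffrak(\alpha)=\Exp(S)$, which for $S=\Alt(5)$ equals $30=\frac{1}{2}|S|$, so there is no margin whatsoever (\enquote{far below} fails, and any bound losing even a factor $2$, such as your $\ord(\A_{\beta',y})\leq(\text{small multiple of})\Exp(S\rtimes\langle\beta'\rangle)$, breaks down); and a bound like $\meo(\Aut(S))\cdot\Exp(S)$, which is what Lemma \ref{shiftLem}(1) naively yields, gives $6\cdot 30=180>\frac{1}{2}|\Alt(5)|$.

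What is missing is the paper's sharper device for $n=1$: since $\Aut(S)$ is complete, every affine map $\A_{\alpha,x}$ of $S$ extends to the affine map $\A_{\conj_{\Aut(S)}(\alpha),x}$ of $\Aut(S)$, and Lemma \ref{shiftLem}(3) applied to $\Aut(S)$ yields $\Ffrak(S)\mid\Exp(\Aut(S))$. The CFSG case analysis then verifies $\Exp(\Aut(S))\leq\frac{1}{2}|S|$ --- but even this bound fails for $\PSL_2(p)$ (including $\Alt(5)\cong\PSL_2(5)$) and a few small groups, where the paper must argue coset by coset: for inner $\alpha$ one uses $\Exp(S)\mid\frac{1}{2}|S|$ (noncyclic Sylow $2$-subgroups), and for outer $\alpha$ one uses that $\ord(\A_{\alpha,x})=\lcm(\ord(\alpha),\ord(x\alpha))$ only involves orders of elements in the outer coset of $\PGL_2(p)$, none of which is divisible by $p$. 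Your sketch, as written, would hit exactly these boundary cases and not close them; similarly, the paper still needs bespoke computations for $n=2$ with $S\in\{\Alt(5),\PSL_2(7)\}$, which your \enquote{polynomial in $k$ versus $|S|^k$} asymptotics do not cover. The large-$k$ orbit-decomposition part of your proposal is plausible and could replace the paper's citation of the $|S^n|^{0.938}$ bound, but the heart of the proposition lies in the tight small cases, and there your controlling quantity is the wrong one.
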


\begin{proof}
Assume otherwise. Since the class of finite solvable groups is closed under group extensions, the quotient $G/\Rad(G)$ is a nontrivial finite group without nontrivial solvable normal subgroups, a so-called nontrivial finite \emph{semisimple} group. By the general structure theory of finite semisimple groups, found e.g.~in \cite[pp.~89ff.]{Rob96a}, the socle (i.e., the subgroup generated by all the minimal nontrivial normal subgroups) of $G/\Rad(G)$ is a nontrivial direct product of nontrivial direct powers $S^n$ of nonabelian finite simple groups $S$. In particular, $G/\Rad(G)$ has a characteristic subgroup of the form $S^n$, and by two applications of Lemma \ref{fLem}(3), we conclude that $\ffrak(S^n)\geq\ffrak(G/\Rad(G))\geq\ffrak(G)>\frac{1}{2}$. In the rest of this proof, we will show that $\ffrak(S^n)\leq\frac{1}{2}$ in order to get a contradiction; our argument is similar to the one in \cite[proof of Theorem 5.2.5]{Bor17a}, where it was shown that $\ffrak(S^n)\leq 1$.

First, assume that $n\geq 3$. Following \cite[proof of Theorem 5.2.5]{Bor17a}, we find that
\[
\Ffrak(S^n)\leq|S^n|^{0.938}=\frac{1}{|S^n|^{0.062}}\cdot|S^n|\leq\frac{1}{60^{3\cdot0.062}}\cdot|S^n|<\frac{1}{2}|S^n|,
\]
a contradiction. Similarly, if $n=2$ and $|S|\geq300$, then
\[
\Ffrak(S^n)\leq|S^n|^{0.938}=\frac{1}{|S^n|^{0.062}}\cdot|S^n|\leq\frac{1}{300^{2\cdot0.062}}\cdot|S^n|<\frac{1}{2}|S^n|.
\]
We now know that if $n>1$, then $n=2$ and $S$ is isomorphic to one of $\Alt(5)$ or $\PSL_2(7)$. Let $\alpha\in\Aut(S^2)=\Aut(S)\wr\Sym(2)$, see \cite[result 3.3.20, p.~90]{Rob96a}. We write $\alpha=\sigma(\alpha_1,\alpha_2)$ with $\alpha_i\in\Aut(S)$ for $i=1,2$ and $\sigma\in\Sym(2)$. If $\sigma$ is trivial, then we have $\ord(\alpha)=\lcm(\ord(\alpha_1),\ord(\alpha_2))$, and otherwise, $2\mid\ord(\alpha)$ and $\alpha^2=(\alpha_2\alpha_1,\alpha_1\alpha_2)$ has conjugate entries, so that $\ord(\alpha)=2\ord(\alpha_1\alpha_2)$. Hence, denoting by $\Ord(H)$ the set of element orders of the finite group $H$ and setting $\mao(H):=\max(\Ord(\Aut(H)))$ (the maximum automorphism order of $H$), we have
\[
\mao(S^2)=\max\{2\mao(S),\max\{\lcm(o_1,o_2)\mid o_1,o_2\in\Ord(\Aut(S))\}\}.
\]
Now consider the two possibilities for $S$ in case $n=2$:
\begin{itemize}
\item If $S=\Alt(5)$, we have $\Ord(\Aut(S))=\Ord(\Sym(5))=\{1,2,3,4,5,6\}$, whence $\mao(S^2)=30$ and, using Lemma \ref{shiftLem}(1),
\[
\Ffrak(S^2)\leq\mao(S^2)\cdot\Exp(S^2)=\mao(S^2)\cdot\Exp(S)=30\cdot30=900=\frac{1}{4}|S^2|,
\]
a contradiction.
\item If $S=\PSL_2(7)$, we have $\Ord(\Aut(S))=\Ord(\PGL_2(7))=\{1,2,3,4,6,7,8\}$, whence $\mao(S^2)=56$ and
\[
\Ffrak(S^2)\leq\mao(S^2)\cdot\Exp(S^2)=\mao(S^2)\cdot\Exp(S)=56\cdot84=4704=\frac{1}{6}|S^2|,
\]
another contradiction.
\end{itemize}
We thus conclude that $n=1$. For $\alpha\in\Aut(S)$, set $\overline{\alpha}:=\conj_{\Aut(S)}(\alpha)$, a natural extension of $\alpha$ on $S$ under the identification of $S$ with $\Inn(S)$. For $x\in S$, the affine map $\overline{A}:=\A_{\overline{\alpha},x}$ of $\Aut(S)$ is an extension of the affine map $A:=\A_{\alpha,x}$ of $S$, whence $\ord(A)\mid\ord(\overline{A})$. Since $\Aut(S)$ is complete, it follows by Lemma \ref{shiftLem}(3) that $\Ffrak(\alpha)\mid\Ffrak(\overline{\alpha})=\Exp(\Aut(S))$, whence
\begin{equation}\label{FfrakEq}
\Ffrak(S)\leq\Exp(\Aut(S)).
\end{equation}
We now make a case distinction according to the classification of finite simple groups:
\begin{enumerate}
\item Case: $S$ is sporadic (including the Tits group). Then by Formula (\ref{FfrakEq}), we have $\Ffrak(S)\leq\Exp(\Aut(S))\leq|\Out(S)|\cdot\Exp(S)<\frac{1}{2}|S|$, where the last inequality can be verified by reading off $|\Out(S)|$ and $\Exp(S)$ from the information on sporadic groups available from the ATLAS of Finite Groups \cite{ATLAS}.
\item Case: $S$ is alternating. We distinguish a few subcases:
\begin{enumerate}
\item First, assume that $S=\Alt(5)$. Let $\alpha\in\Aut(S)$. If $\alpha\in\Inn(S)$, then by Lemma \ref{shiftLem}(3), $\Ffrak(\alpha)=\Exp(S)=30=\frac{1}{2}|S|$. And if $\alpha\notin\Inn(S)$, then by Lemma \ref{shiftLem}(3), applied to $\Aut(S)\cong\Sym(5)$, we have for each $x\in S$ that
\begin{align*}
\ord(\A_{\alpha,x}) &=\lcm(\ord(\alpha),\ord(x\alpha))\mid\lcm(\{\ord(\chi)\mid \chi\in\Sym(5)\setminus\Alt(5)\}) \\
&=\lcm(2,4,6)=12,
\end{align*}
whence $\Ffrak(\alpha)\leq12<30=\frac{1}{2}|S|$.
\item Next, assume that $S=\Alt(n)$ with $n\in\{6,7\}$. Then by Formula (\ref{FfrakEq}), we have $\Ffrak(S)\leq\Exp(\Aut(S))<\frac{1}{2}|S|$, since $\Exp(\Aut(\Alt(6)))=120$, read off from the ATLAS of Finite Groups \cite[p.~5]{ATLAS}, and $\Exp(\Aut(\Alt(7)))=\Exp(\Sym(7))=420$.
\item Finally, assume that $S=\Alt(n)$ with $n\geq8$. Denote by $\psi:\IN^+\rightarrow\IN^+,m\mapsto\log{\Exp(\Sym(m))}$, the second Chebyshev function. Using \cite[Theorem 12]{RS62a} and Formula (\ref{FfrakEq}), we conclude that
\[
\Ffrak(S)\leq\Exp(\Aut(S))=\Exp(\Sym(n))=\e^{\psi(n)}<\e^{1.03883n}<\frac{1}{4}n!=\frac{1}{2}|S|.
\]
\end{enumerate}
\item Case: $S$ is of Lie type, say in defining characteristic $p$. Denote by $\Out(S)$ the outer automorphism group $\Aut(S)/\Inn(S)$ of $S$. Note that by Formula (\ref{FfrakEq}) and the inequalities $\Exp(\Aut(S))\leq\Exp(S)\cdot|\Out(S)|$ and $\Exp(S)_{p'}\leq|S|_{p'}$, it is sufficient to show that
\begin{equation}\label{expspEq}
\Exp(S)_p\cdot|\Out(S)|\leq\frac{1}{2}|S|_p
\end{equation}
in order to obtain $\Ffrak(S)\leq\frac{1}{2}|S|$. If $X_d$ is the root system associated with the simple linear algebraic group $X_d(\overline{\IF_p})$ from which the inner diagonal automorphism group of $S$ can be obtained as the fixed point subgroup of a suitable Frobenius map on $X_d(\overline{\IF_p})$ (as described in \cite[Section 3, Notation]{Har92a}), then by \cite[Corollary 0.5]{Tes95a} and \cite[Theorem, p.~84]{Hum90a}, we have $\Exp(S)_p=p^{\lceil\log_p{h(X_d)}\rceil}$, where $h(X_d)$ is the Coxeter number of the root system $X_d$. Using this as well as the tabulated values of $h(X_d)$ from \cite[Table 2, p.~80]{Hum90a} and the formulas for $|\Out(S)|$ obtained from \cite[Table 5, p.~xvi]{ATLAS}, we find that Formula (\ref{expspEq}) holds except in the following cases:
\begin{itemize}
\item $S=A_1(p)=\PSL_2(p)$.
\item $S$ is one of the finitely many groups $A_d(q)=\PSL_{d+1}(q)$ for
\[
(d,q)\in\{(1,4),(1,8),(1,9),(1,27),(1,25),(1,49),(2,2),(2,4)\}.
\]
\end{itemize}
For dealing with $S=\PSL_2(p)$, we can use the following argument, which generalizes the one for $\Alt(5)$ from above: We know that $S$ is of index $2$ in $\Aut(S)\cong\PGL_2(p)$. Let $\alpha\in\Aut(S)$. If $\alpha\in\Inn(S)$, then by Lemma \ref{shiftLem}(3), we have $\Ffrak(\alpha)=\Exp(S)\mid\frac{1}{2}|S|$, where the divisibility holds because the Sylow $2$-subgroups of $S$ are not cyclic \cite[Lemma 1.4.1, p.~34]{ALSS11a}. If, on the other hand, $\alpha\notin\Inn(S)$, then by Lemma \ref{shiftLem}(3), applied to $\Aut(S)$, we have for each $x\in S$ that
\[
\ord(\A_{\alpha,x})\mid\lcm(\{\ord(\beta)\mid \beta\in\PGL_2(p)\setminus\PSL_2(p)\})\mid\frac{2\Exp(S)}{p}\mid\frac{|S|}{p},
\]
where the second divisibility holds because $\PGL_2(p)\setminus\PSL_2(p)$ contains no elements of order divisible by $p$. In particular, $\Ffrak(\alpha)\leq\frac{|S|}{p}\leq\frac{|S|}{2}$.

Finally, each of the remaining finitely many groups $S$ listed above except $\PSL_2(49)$ has an entry in the ATLAS of Finite Groups \cite{ATLAS}, from which we can read off $\Exp(\Aut(S))$ and check that it is at most $\frac{1}{2}|S|$, so that $\Ffrak(S)\leq\frac{1}{2}|S|$ by Formula (\ref{expspEq}). For $S=\PSL_2(49)$, we use the facts that $\Out(S)\cong(\IZ/2\IZ)^2$ and that the Sylow $2$-subgroups of $S$ are not cyclic \cite[Lemma 1.4.1, p.~34]{ALSS11a} to conclude that
\begin{align*}
\Exp(\Aut(S)) &\leq\Exp(S)\cdot\Exp(\Out(S))=2\Exp(S)=2\cdot7^{\lceil\log_7(2)\rceil}\cdot\Exp(S)_{7'} \\
&=14\Exp(S)_{7'}\leq14\cdot\frac{|S|_{7'}}{2}=7|S|_{7'}=\frac{1}{7}|S|<\frac{1}{2}|S|.
\end{align*}
\end{enumerate}
\end{proof}

\begin{proof}[Proof of Theorem \ref{mainTheo}(1)]
Let $G$ be a finite group with $\ffrak(G)>\frac{1}{2}$. By Proposition \ref{mainProp2}, $G$ is solvable, and by Proposition \ref{mainProp}, the derived length of $G$ is at most $3$, as required.
\end{proof}

\section{Some related open problems and questions}\label{sec5}

A natural question for spurring further research based on known results is whether those results can be improved in some way. With regard to Theorem \ref{mainTheo}(1), we note that the words \enquote{solvable of derived length at most $3$} cannot be replaced by \enquote{abelian}, since finite dihedral groups have affine maps that move all group elements in one cycle, see \cite[Examples 2.3(ii)]{JS75a}. However, the author knows of no examples that achieve derived length $3$, making the following a natural question for further research:

\begin{question}\label{ques1}
Is a finite group $G$ admitting an affine map of order larger than $\frac{1}{2}|G|$ necessarily metabelian?
\end{question}

Note that if one can even show that a finite group $G$ with $\ffrak(G)>\frac{1}{2}$ is metabelian, then this also improves the bound in Theorem \ref{mainTheo}(2) to $3\cdot\lfloor\log_2(\rho^{-1})\rfloor+2$. Moreover, in the hypothetical improved version of Theorem \ref{mainTheo}(1) where \enquote{solvable of derived length at most $3$} would be replaced by \enquote{metabelian}, the constant $\frac{1}{2}$ would be optimal, because by Lemma \ref{shiftLem}(3), the complete group $G=\Sym(4)$, which is solvable of derived length $3$, has an affine map of order $\lcm(3,4)=12=\frac{1}{2}|G|$.

The next problem is more ambitious than Question \ref{ques1}. To motivate it, we note that Theorem \ref{mainTheo}(1) has an analogue for automorphisms, \cite[Theorem 1.1.1(1)]{Bor17a}: A finite group $G$ with an automorphism $\alpha$ such that $\ord(\alpha)>\frac{1}{2}|G|$ is abelian. Based on this, one can classify the pairs $(G,\alpha)$ where $G$ is a finite group and $\alpha$ is an automorphism of $G$ with $\ord(\alpha)>\frac{1}{2}|G|$, see \cite[Corollary 1.1.8]{Bor16a} and \cite[Corollary 1]{Hor74a}. It would be interesting to achieve such a classification for affine maps too:

\begin{problem}\label{prob1}
Classify the pairs $(G,A)$ where $G$ is a finite group and $A$ is an affine map of $G$ with $\ord(A)>\frac{1}{2}|G|$.
\end{problem}

Through Theorem \ref{mainTheo}(2) and \cite[Theorem 1.1.3(2)]{Bor17a}, we now know that both the index and the derived length of the solvable radical of a finite group $G$ with an affine map of order at least $\rho|G|$ are (explicitly) bounded from above in terms of $\rho$. One may ask whether even stronger restrictions on the structure of $G$ can be inferred from this assumption. For example, it follows from a result of Neumann \cite[Theorem 1]{Neu89a} and was also observed later by Guralnick and Robinson \cite[Lemma 2(iii) and Theorems 10(ii) and 12(i)]{GR06a} that both the index of the Fitting subgroup of $G$ (the largest nilpotent normal subgroup of $G$) and the derived length of $\Rad(G)$ can be bounded from above in terms of the \emph{commuting probability of $G$}, i.e., the number
\[
\frac{|\{(x,y)\in G^2\mid xy=yx\}|}{|G|^2}.
\]
This raises the following question:

\begin{question}\label{ques2}
If a finite group $G$ admits an affine map $A$ of order at least $\rho|G|$ for a given $\rho\in\left(0,1\right]$, then is the commuting probability of $G$ bounded from below by an (explicit) positive-valued function in $\rho$? If not, does the answer become \enquote{yes} at least if $A$ is assumed to be an automorphism of $G$?
\end{question}

The last question is not concerned with affine maps of large order, but with the relationship between the order and the largest cycle length of an affine map. It was observed by Horo\v{s}evski\u{\i} that the largest cycle length of an automorphism $\alpha$ of a finite group $G$ can be strictly smaller than $\ord(\alpha)$, see \cite[remarks after Corollary 1]{Hor74a}. However, he showed that these two numbers are equal if $G$ is nilpotent \cite[Corollary 1]{Hor74a}, or if $G$ has no nontrivial solvable normal subgroups \cite[Theorem 1]{Hor74a}. The latter result was generalized by the author to affine maps, see \cite[Theorem 3.5.1]{Bor17a}, but for nilpotent groups, it is still open:

\begin{question}\label{ques3}
Is it true that for every finite nilpotent group $G$ and every affine map $A$ of $G$, the order of $A$ is equal to the largest cycle length of $A$ on $G$?
\end{question}

\end{document}